\definecolor{DPurple}{rgb}{0.46,0.2,0.69}
\numberwithin{equation}{section}
\theoremstyle{definition}
\newtheorem{definition}{Definition}[section]
\theoremstyle{remark}
\newtheorem{remark}[definition]{Remark}
\newtheorem{conjecture}[definition]{Conjecture}
\theoremstyle{plain}
\newtheorem{theorem}[definition]{Theorem}
\newtheorem{result}[definition]{Result}
\newtheorem{lemma}[definition]{Lemma}
\newtheorem{proposition}[definition]{Proposition}
\newtheorem{corollary}[definition]{Corollary}
\newcommand{\eps}{\varepsilon}
\newcommand{\K}{\kappa}
\newcommand{\exc}{\mathcal{E}}
\newcommand{\zt}{\zeta}
\newcommand{\F}{\boldsymbol{\rm F}}
\newcommand{\J}{\boldsymbol{\rm J}}
\newcommand{\filled}{\mathcal K}
\newcommand{\basin}{\mathcal A}
\newcommand{\lar}{\mathcal U}
\newcommand{\inv}{\mathcal A_{\infty}(\gen)}
\newcommand{\holo}{{\rm Hol}}
\newcommand\lead[1]{\Lambda(#1)}
\newcommand{\ord}{{\rm ord}}
\newcommand{\gen}{\mathcal G}
\newcommand{\fixed}{a} 
\newcommand{\arb}{y} 
\newcommand{\sub}{\nu}
\newcommand{\supp}{{\rm supp}}
\newcommand{\cpt}{{\rm cap}}
\newcommand{\critall}{\boldsymbol{C^*}}
\newcommand{\crit}{\boldsymbol{C}}
\newcommand{\wgen}{w_{\raisebox{-1.5pt}{$\scriptstyle{\gen}$}}}
\newcommand{\bad}{\mathfrak{B}}
\newcommand{\Q}{Q_{\gen}}
\newcommand{\weakST}{\xrightarrow{\text{weak${}^{\boldsymbol{*}}$}}}
\newcommand{\FS}{\omega_{FS}}
\newcommand{\corrC}{\Gamma_{\raisebox{-2.5pt}{$\scriptstyle{\!\mathscr{C}}$}}}
\newcommand{\equij}{\sigma_{\raisebox{3pt}{$\scriptstyle{\J(S)}$}}}
\newcommand{\sph}{\widehat{\mathbb{C}}}
\newcommand{\C}{\mathbb{C}} 
\newcommand{\Z}{\mathbb{Z}}
\newcommand{\N}{\mathbb{N}}
\begin{document}

\title[Dynamics of polynomial semigroups]{Dynamics of polynomial semigroups: measures, \\
potentials, and external fields}

\author{Mayuresh Londhe}
\address{Department of Mathematics, Indian Institute of Science, Bangalore 560012, India}
\email{mayureshl@iisc.ac.in}

\keywords{External fields, invariant measures, polynomial semigroups, weighted potential theory}
\subjclass[2010]{Primary: 31A15, 37F05; Secondary: 32H50}

\begin{abstract}
{In this paper, we give a description of a natural invariant measure associated with a finitely generated
polynomial semigroup (which we shall call the Dinh--Sibony measure)
in terms of potential theory. This requires the theory of logarithmic potentials
in the presence of an external field, which, in our case, is \emph{explicitly} determined by the choice
of a set of generators. Along the way, we establish the continuity
of the logarithmic potential for the Dinh--Sibony measure, which might be of independent interest.
We then use the $F$-functional of Mhaskar and Saff to discuss
bounds on the capacity and diameter of the Julia sets of such semigroups.}
\end{abstract}

\maketitle

\vspace{-0.25cm}
\section{Introduction and statement of main results}\label{S:intro}
A \emph{rational semigroup} is a subsemigroup of $\holo(\sph; \sph)$\,---\,the semigroup
with respect to composition of holomorphic self-maps of $\sph$\,---\,containing no constant maps
(where $\sph$ denotes the Riemann sphere). The investigation of such semigroups was
initiated by Hinkkanen and Martin in \cite{HinkMart:dsrf96}. Given a finitely generated rational
semigroup $S$ containing at least one
element of degree at least $2$, and a set of generators $\gen$, there happens to be a
dynamically meaningful probability measure $\mu_{\gen}$ associated with the pair $(S, \gen)$.
This paper is dedicated to the following question:
can one describe $\mu_{\gen}$, which is constructed purely dynamically, in terms of the theory
of logarithmic potentials? The motivation for this is that potential theory in $\C$ is
such a well-developed and deeply explored field that identifying $\mu_{\gen}$ in
potential-theoretic terms would reveal new information about the various invariant sets
associated with $S$.   
\smallskip

The measure $\mu_{\gen}$ is a measure that is preserved, in an appropriate sense, by a
holomorphic correspondence on $\sph$ associated with $(S, \gen)$. To
make precise what this means, we begin with

\begin{definition}\label{D:holCorr}
Let $X_1$ and $X_2$ be two compact, connected complex manifolds of dimension $k$. A
\emph{holomorphic correspondence} from $X_1$ to $X_2$ is a formal linear combination
of the form
\begin{equation}\label{E:stdForm}
 \Gamma = \sum\nolimits_{1\leq i\leq N}\!m_i\Gamma_i,
\end{equation}
where the $m_i$'s are positive integers and $\Gamma_1, \Gamma_2,\dots,\Gamma_N$ are distinct
irreducible complex-analytic subvarieties of $X_1\times X_2$ of pure dimension $k$ that satisfy the following
conditions:
\begin{enumerate}[label=(\roman*), leftmargin=27pt]
\item for each $\Gamma_i$ in \eqref{E:stdForm},
$\left.\pi_1\right|_{\Gamma_i}$ and $\left.\pi_2\right|_{\Gamma_i}$ are surjective;
\item for each $x\in X_1$ and $y\in X_2$, $\left(\pi_1^{-1}\{x\}\cap \Gamma_i\right)$ and
  $\left(\pi_2^{-1}\{y\}\cap \Gamma_i\right)$ are finite sets for each $i$
\end{enumerate}
(where $\pi_j$ is the projection onto $X_j, \ j=1,2$). 
\end{definition}
A holomorphic correspondence $\Gamma$ induces a set-valued function, which
we denote by $F_{\Gamma}$, 
\[
X_1\supseteq A \longmapsto \bigcup\nolimits_{1\leq i\leq N}\!\pi_2\left(\pi_1^{-1}(A)\cap \Gamma_i\right).
\]
We shall denote $F_{\Gamma}(\{x\})$ by $F_{\Gamma}(x)$.
If $X_1 = X_2 = X$ in the above definition then we say that $\Gamma$ is a \emph{holomorphic correspondence
\textbf{on} $X$}. Two holomorphic correspondences on $X$ can be composed with each other (see
Section~\ref{SS:DinSib}). This, and the map
$F_{\Gamma}$, introduce the dynamical element in the study of holomorphic correspondences.
\smallskip

Definition~\ref{D:holCorr} and the discussion immediately prior to it suggest the following natural

\begin{definition}\label{D:assoc}
Let $S$ be a finitely generated rational semigroup and let $\gen = \{g_1, g_2,\dots, g_N\}$ be a
set of generators of $S$, i.e., $S = \langle g_1, g_2,\dots, g_N\rangle$.
We call the following holomorphic correspondence
\begin{equation}\label{E:assoc}
\Gamma_{\gen} := \sum\nolimits_{1\leq i\leq N}{\rm graph}(g_i)
\end{equation}
the {\em holomorphic correspondence associated with $(S, \gen)$}.
\end{definition}

Now, $\mu_{\gen}$ arises from a very general construction by Dinh and
Sibony \cite{DinhSibony:dvtma06} applied to the holomorphic correspondence $\Gamma_{\gen}$.
We provide a little background. Let $X$ be a compact Riemann surface and $\Gamma$
a holomorphic correspondence on $X$. Let $d_1(\Gamma)$ be the generic number
of inverse images and $d_0(\Gamma)$ the generic number of
forward images under $\Gamma$,
both counted according to multiplicity (see Section~\ref{SS:DinSib}).
Dinh--Sibony show that regular Borel measures can be pulled back under $\Gamma$; for such
a measure $\mu$, let $F_{\Gamma}^*\mu$ denote its pull-back (see
\cite[Section~2.4]{DinhSibony:dvtma06} for details). The main results
of \cite{DinhSibony:dvtma06} applied to the latter set-up imply that when
$d_1(\Gamma) > d_0(\Gamma)$, there
exist a polar set $E\varsubsetneq X$
and a regular Borel probability measure $\mu_{\Gamma}$ such that
\begin{equation}\label{E:asymp1}
 \frac{1}{d_1(\Gamma)^n}F_{\Gamma^{\circ n}}^*(\delta_a) \weakST \mu_{\Gamma}
 \;\; \text{as $n\to \infty$,} \;\; 
 \forall a\in X\setminus E.
\end{equation}
The measure $\mu_{\gen}$ is the limit measure given by \eqref{E:asymp1} taking
$\Gamma = \Gamma_{\gen}$.
\smallskip

As $\mu_{\gen}$ is a special case of a construction in \cite{DinhSibony:dvtma06}, we
shall call it the \emph{Dinh--Sibony measure associated with $(S, \gen)$}. We must mention
that, under a further constraint on $S$\,---\,$S$ as above\,---\,which is reflected in his
choice of generating set $\gen$ of $S$, the measure $\mu_{\gen}$ was discovered by Boyd
\cite{boyd:imfgrs99}. Also see \cite{sumi:spmrfgrs00} by Sumi for another approach to
associating dynamically interesting measures with rational semigroups. Boyd's construction is
not based on the formalism of correspondences. Our main theorems, however,
do not rely principally on Boyd's construction, nor do they rely on his methods. We shall not
dwell on the reasons for this, but the interested reader is referred to
\cite[Remark~4.1]{BhaSri:hcrfgrs17} and to the fact that the semigroups that we
shall consider are allowed to have degree-one elements. The semigroups we shall consider
are described by the following

\begin{definition}\label{D:poly}
A rational semigroup $S$ is called a \emph{polynomial semigroup} if all its elements are polynomials,
any degree-one element of $S$ has an attracting fixed point at $\infty$, and $S$ contains at least one
element of degree at least $2$.
\end{definition}

\begin{remark}
Unlike what its name suggests, a polynomial semigroup cannot contain arbitrary degree-one
elements. Yet, we choose the latter name for the semigroups considered here because we
want the terminology to evoke Brolin's result \cite[Theorem~16.1]{brolin:isuirm65} on an
invariant measure associated with a polynomial $P$ of degree at least 2. That $P$ has an
attracting fixed point at $\infty$ is a crucial part of Brolin's proof. This is what motivates
our condition on degree-one elements in Definition~\ref{D:poly}.
In fact, we shall see that our Theorem~\ref{T:main} subsumes Brolin's theorem.
\end{remark}
 
We first show that the conditions defining a \emph{finitely} generated
polynomial semigroup $S$ imply something
interesting about its generators. A set of generators 
$\gen=\{ g_1, g_2,\dots,g_N\}$ of $S$ is called $\emph{minimal}$ 
if no function $g_i$ can be expressed as a composition involving the remaining generators.
The existence of such sets is clear, but more can be said: 

\begin{proposition}\label{P:unique}
Every finitely generated polynomial semigroup has a unique minimal generating set.
\end{proposition}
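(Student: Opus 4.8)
The plan is to characterize the minimal generating set intrinsically, as a subset of $S$ defined without reference to any particular generating set, and then to show that every minimal generating set coincides with it. Specifically, I would introduce the set of \emph{indecomposable} elements of $S$: an element $g \in S$ is indecomposable if whenever $g = h_1 \circ h_2 \circ \cdots \circ h_k$ with each $h_j \in S$, then $k = 1$ (i.e., $g$ itself is the only factor). The key observation is that degrees multiply under composition: $\deg(h_1 \circ h_2) = \deg(h_1)\deg(h_2)$. Hence any element of degree $\geq 2$ can be written as a product of indecomposables by induction on degree, since a proper factorization strictly decreases degree on each factor (each factor has degree $\geq 1$, and at least one has degree $\geq 2$ forcing the others to have smaller degree than $g$... one must be slightly careful with degree-one factors).

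The main obstacle is precisely the degree-one elements, since composing with a degree-one element does not change degree, so the naive degree induction stalls. Here I would use the hypothesis in Definition~\ref{D:poly}: any degree-one element of $S$ is an affine map $z \mapsto az+b$ with $|a|>1$ (an attracting fixed point at $\infty$ for a degree-one polynomial means the linear coefficient has modulus $>1$). The degree-one elements of $S$ therefore form a subsemigroup of the affine group, and such a semigroup of strict dilations (in the sense that the multiplier has modulus $>1$) is \emph{free of torsion and has no nontrivial factorization cycles}: if $g = h \circ g'$ with $g, g', h$ all degree one, then the multiplier of $g$ is the product of those of $h$ and $g'$, and since each has modulus $>1$, the factor multipliers have modulus strictly less than that of $g$. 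So one gets a well-founded partial order on degree-one elements too, via the modulus of the multiplier, and the combined induction (on degree first, then on multiplier-modulus among degree-one elements) terminates. This shows every element of $S$ is a finite composition of indecomposables.

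Next I would show that the set $I$ of indecomposable elements of $S$ is \emph{finite} and is \emph{contained in every generating set}. Finiteness: given a finite generating set $\gen$, every indecomposable element, being a composition of generators, must by indecomposability be a single generator — so $I \subseteq \gen$ and $|I| \leq |\gen| < \infty$. Containment in every generating set follows by the same argument: if $\gen'$ is any generating set and $g \in I$, then $g$ is a composition of elements of $\gen'$, hence (indecomposable) $g \in \gen'$. Combined with the previous paragraph, $I$ generates $S$, so $I$ is itself a generating set, and it is the smallest one.

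Finally I would reconcile $I$ with the notion of \textbf{minimal} generating set from the statement. If $\gen$ is minimal, then $\gen \supseteq I$ and $\gen$ generates $S$; but if $\gen \supsetneq I$, pick $g \in \gen \setminus I$. Since $g \notin I$, $g = h_1 \circ \cdots \circ h_k$ with $k \geq 2$ and each $h_j \in S$; expanding each $h_j$ in terms of the generators $\gen$ and noting that $g$ itself cannot reappear as a factor (again by the well-founded order: each $h_j$ has either strictly smaller degree, or the same degree with strictly smaller multiplier-modulus, than $g$, so $g$ does not occur in the expansion), we express $g$ as a composition of elements of $\gen \setminus \{g\}$, contradicting minimality. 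Hence $\gen = I$, proving both existence (already clear) and uniqueness of the minimal generating set. I expect the delicate point to be making the ``$g$ cannot reappear among its own factors'' step fully rigorous, which is exactly where the attracting-fixed-point-at-$\infty$ hypothesis on degree-one elements is indispensable; everything else is a straightforward induction.
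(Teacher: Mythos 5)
Your argument is, at its core, the same as the paper's: the decisive mechanism is that the leading coefficient of a composition $h_1 \circ \cdots \circ h_k$ is $\Lambda(h_1)\Lambda(h_2)^{\deg h_1}\cdots$, together with the fact (from the attracting-fixed-point-at-$\infty$ hypothesis) that every degree-one element of $S$ has $|\Lambda| > 1$, so that a generator cannot resurface as a factor in one of its own length-$\geq 2$ factorizations. The paper deploys this directly by comparing two putative minimal generating sets and deriving a contradiction from \eqref{E:compo}; you repackage it through the intrinsic notion of ``indecomposable'' and a lexicographic order $(\deg,\,|\Lambda|)$. The repackaging is pleasant and arguably conceptually cleaner, but it is the same proof.

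Two imprecisions in the write-up are worth flagging. First, the secondary comparison by $|\Lambda|$ is needed for \emph{all} degrees, not only ``among degree-one elements'' as you state in the middle paragraph: if $\deg g\geq 2$ and $g=h_1\circ h_2$ with $\deg h_1=1$, then $\deg h_2=\deg g$ and the degree induction stalls, and one must invoke $|\Lambda(h_2)|=|\Lambda(g)|/|\Lambda(h_1)|<|\Lambda(g)|$. Your final paragraph tacitly uses the correct order (``strictly smaller degree, or the same degree with strictly smaller multiplier-modulus''), but the middle paragraph's stated order does not match it. Second, when you assert the ``combined induction terminates'', you are claiming that $(\deg, |\Lambda|)$ is well-founded on $S$, which is not automatic since $|\Lambda|$ is real-valued; it does hold, but justifying it essentially reproduces the growth estimate in the paper's Lemma~\ref{L:repre} (a word of length $n$ with boundedly many degree-$\geq 2$ factors has $|\Lambda|\gtrsim \Lambda_1^{\,n}$, $\Lambda_1>1$), and you do not supply that. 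Fortunately this well-foundedness claim, which you use only to show that $I$ generates $S$, is not needed for the proposition: your final paragraph by itself shows that any minimal generating set must coincide with $I$ (since a decomposable generator has, after expansion, a $\gen$-word representation of length $\geq 2$ in which it does not reappear, violating minimality), and existence is clear by discarding redundant generators one at a time — so uniqueness follows without the detour.
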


This proposition is important because, given a finitely generated polynomial semigroup
$S$, it identifies a set of generators of $S$ that is, in a precise sense, \textbf{canonical.}
We shall denote the unique minimal generating set of $S$ as $\gen_S$.
\smallskip

In \cite[Theorem~16.1]{brolin:isuirm65}, Brolin constructed an invariant measure associated with a
polynomial $P$ of degree at least 2 (which turns out to be precisely the Dinh--Sibony measure
associated with $(S, \gen~\!\!=~\!\!\{P\})$) and showed it to be the equilibrium measure of the Julia set of
$P$. 
This result \textbf{cannot} extend
naively to finitely generated polynomial semigroups $S$ with more than one generator because:
\renewcommand{\theenumi}{{\emph{\roman{enumi}}}}
\begin{enumerate}[leftmargin=27pt]
\item\label{I:interior} It is easy to construct finitely generated polynomial semigroups $S$ whose
Julia sets $\J(S)$ (see Section~\ref{S:def} for a definition) have non-empty interiors. See, for instance,
\cite[Example~1]{HinkMart:dsrf96}.

\item\label{I:support} There exist finitely generated polynomial semigroups $S$ as in
\eqref{I:interior} and a choice of generating set
$\gen$ such that $\supp(\mu_{\gen}) = \J(S)$. See \cite[Theorem~1]{boyd:imfgrs99}.
\end{enumerate}
Now, if for a semigroup of the above kind, and a choice of generating set as in \eqref{I:support},
the measure $\mu_{\gen}$ were the equilibrium measure of $\J(S)$ then it would have to be
supported on the exterior boundary of $\J(S)$, which would contradict $\eqref{I:support}$. This
is the fundamental problem one must understand in order to answer the question posed at the beginning
of this section.
\smallskip

The latter problem is solved by turning to the theory of logarithmic potentials in
the presence of an external field. Roughly speaking, an \emph{equilibrium measure associated with an
external field} gives the distribution of a unit charge on a conductor, in the presence of an
external electrostatic potential, that minimizes energy (the classical equilibrium measure
gives the latter distribution in the absence of any external field). To make mathematical sense,
this electrostatic potential
must satisfy certain admissibility conditions and, in the mathematical literature, is called an
\emph{external field}: see Section~\ref{S:potential} for definitions and
\cite[Chapter~I]{SaffTotik:lpwef97} by Saff and Totik for details. Our first theorem says,
in essence, that, given a finitely generated polynomial semigroup $S$ and a finite set of
generators $\gen$, the measure $\mu_{\gen}$ is the equilibrium measure associated with
an external field that is described \textbf{explicitly} by $\gen$. Now we introduce this external
field. For $(S, \gen)$ as above, we define the \emph{dynamical Green's function associated
with $(S, \gen)$} to be the upper semicontinuous regularization of
\[
G_{\gen}(z):=\limsup_{n \to \infty} \frac{1}{d_1(\Gamma_{\gen})^n} 
\log \Bigg(\displaystyle \prod_{l(g)=n} {|g(z)-a|}\Bigg),
\]
where $a$ is arbitary element outside a certain polar set (see Section~\ref{S:def} for the
meaning of the above product).
Let us denote the upper semicontinuous
regularization of $G_{\gen}$\,---\,see Section~\ref{S:main} for a definition\,---\,by $G^*_{\gen}$.
That $G^*_{\gen}$ does not depend on $a$ as above is a consequence of 
\eqref{E:asymp1} with $\Gamma = \Gamma_{\gen}$\,---\,we shall say more
about this; see Remark~\ref{Rem:independence}. 
The external field that is relevant to our problem is $G^*_{\gen}$ restricted to $\J(S)$.
\smallskip

A point $z \in \sph$ is called $\emph{exceptional}$ if the set
\[
O^{-}(z):=\{ x \in \sph : g(x)=z \text{ for some } g \in S\}
\] 
is finite. We denote the set of exceptional points by $\exc(S)$. It is well known that,
for a rational semigroup $S$, $\sharp(\exc(S))\leq 2$. Note that for a polynomial semigroup $S$,
$\infty \in \exc(S)$, and hence it has at most one exceptional point in $\C$.
Now we are in the position to state

\begin{theorem}\label{T:main}
Let $S$ be a finitely generated polynomial semigroup. Define
\[
\begin{split}
\critall[S] &:=\{c \in \C:  g'(c)=0 \text{ for some } g\in \gen_S\}, \\
\crit[S] &:= \{c \in \J(S):  g'(c)=0 \text{ for some } g\in \gen_S\}.
\end{split}
\]
Suppose $S$ has the property that if $\sharp\critall[S] = 1$ then 
$\crit[S] \cap \exc(S)=\emptyset$.
Then, for \emph{any} finite set of generators $\gen$ of $S$, the Dinh--Sibony measure 
$\mu_{\gen}$ is the equilibrium measure associated with the external field $G^*_{\gen}|_{\J(S)}$.
\end{theorem}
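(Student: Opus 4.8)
The plan is to identify $\mu_{\gen}$ via the variational characterization of the weighted equilibrium measure. Recall that, for an admissible external field $Q$ on a compact set $K$ of positive capacity, the weighted equilibrium measure $\mu_Q$ is the unique probability measure on $K$ minimizing the weighted energy $\iint \log|z-w|^{-1}\,d\nu(z)\,d\nu(w) + 2\int Q\,d\nu$, and it is characterized by the Frostman-type conditions: there is a constant $F$ (the modified Robin constant) with $U^{\mu_Q}(z) + Q(z) \geq F$ quasi-everywhere on $K$ and $U^{\mu_Q}(z) + Q(z) \leq F$ on $\supp(\mu_Q)$, where $U^{\nu}$ denotes the logarithmic potential of $\nu$. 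So the strategy is: take $K = \J(S)$, $Q = G^*_{\gen}|_{\J(S)}$, and verify that $\mu_{\gen}$ satisfies these two inequalities, with an appropriate constant $F$. The key analytic input will be the relation between $G^*_{\gen}$, the logarithmic potential $U^{\mu_{\gen}}$, and the constant $\log d_1(\Gamma_{\gen})$ coming from the functional equation for $G_{\gen}$.

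First I would establish the functional equation. The defining $\limsup$ for $G_{\gen}$ satisfies a self-similar identity: applying each generator $g_i$ and using $\deg g_i$ and the weight structure of $\Gamma_{\gen}$, one obtains, away from a polar set, an identity of the shape $\sum_{i} (\text{mult}_i)\, G^*_{\gen}(g_i(z)) = d_1(\Gamma_{\gen})\, G^*_{\gen}(z)$, or rather the pulled-back version $F_{\Gamma_{\gen}}^*$-invariance of the measure $\Delta G^*_{\gen}$ (in the distributional sense, $\Delta G^*_{\gen}$ up to normalization is $\mu_{\gen}$ on $\C$ together with a possible mass at the exceptional point). Second, I would use the continuity of $U^{\mu_{\gen}}$ — which the abstract says is proved in the paper and which I would invoke as a given — to write $G^*_{\gen}(z) = U^{\mu_{\gen}}(z) \cdot(-1) + (\text{harmonic correction}) + c$ on $\C$; more precisely, $G^*_{\gen}$ is a subharmonic function with Riesz measure $\mu_{\gen}$, so $G^*_{\gen}(z) = -U^{\mu_{\gen}}(z) + h(z)$ with $h$ harmonic off the support, and growth at $\infty$ forces $h$ to be (a multiple of) the relevant logarithmic term. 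On $\J(S)$ itself, since $\J(S)$ is contained in the filled Julia set and $G^*_{\gen}$ vanishes there in an appropriate sense (the Julia set is the boundary of the basin of $\infty$ for the correspondence), one deduces $U^{\mu_{\gen}}(z) + G^*_{\gen}(z) = F$ quasi-everywhere on $\J(S)$, which is exactly the Frostman equality; and the inequality $\geq F$ everywhere on $\J(S)$ follows from subharmonicity together with the behavior of $G^*_{\gen}$ off $\J(S)$. Finally, I would check admissibility of $G^*_{\gen}|_{\J(S)}$ as an external field (lower semicontinuity is automatic from the usc-regularization being applied to $-G_{\gen}$ in the right variable, and the growth/finiteness conditions follow from continuity of $U^{\mu_{\gen}}$ and boundedness of $\J(S)$), and invoke uniqueness of the weighted equilibrium measure to conclude $\mu_{\gen} = \mu_{G^*_{\gen}|_{\J(S)}}$.

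The hypothesis on $\critall[S]$ and $\crit[S]$ enters precisely at the point where one must rule out that $\mu_{\gen}$ (equivalently the Riesz mass of $G^*_{\gen}$) charges the exceptional point in $\C$, or more subtly, where one needs the exceptional point — if it lies in $\J(S)$ — not to create an atom that would violate the quasi-everywhere Frostman inequality or destroy continuity of the potential. When $\sharp\critall[S] = 1$ there is essentially one critical orbit, and if its $\J(S)$-part met $\exc(S)$ one could get a genuine atom of $\mu_{\gen}$ at that exceptional point; the hypothesis forbids exactly this degenerate configuration, so that $\mu_{\gen}$ has no atoms and the continuity result applies cleanly. I expect this atom-exclusion step — reconciling the dynamically defined $\mu_{\gen}$ on all of $\sph$ with a purely potential-theoretic object living on $\J(S) \subset \C$, and controlling the exceptional point — to be the main obstacle; the rest is a fairly standard translation between the Dinh--Sibony equidistribution in \eqref{E:asymp1}, the subharmonicity of $G^*_{\gen}$, and the Frostman characterization of $\mu_Q$.
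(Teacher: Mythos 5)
Your high-level target is correct\,---\,identify $\mu_{\gen}$ via the Frostman-type characterization of the weighted equilibrium measure (this is Result~\ref{R:frost} in the paper), with $K = \J(S)$ and $Q = G^*_{\gen}|_{\J(S)}$\,---\,and it is also correct that continuity of $U^{\mu_{\gen}}$ is the analytic engine. But the step in which you actually establish the Frostman equality $U^{\mu_{\gen}} + G^*_{\gen} = F$ on $\J(S)$ is based on a picture that is false for semigroups. You argue that ``$\J(S)$ is contained in the filled Julia set and $G^*_{\gen}$ vanishes there in an appropriate sense (the Julia set is the boundary of the basin of $\infty$ for the correspondence).'' Neither clause survives once $S$ has more than one generator: $\J(S)$ can have non-empty interior and is in general a proper subset of $\partial\basin(S)$ (Result~\ref{R:boyd04} only equates the \emph{unbounded} components, and only under an extra hypothesis), and $G^*_{\gen}|_{\J(S)}$ is precisely the external field $Q_{\gen}$, which Theorem~\ref{T:bound} and Conjecture~\ref{Conj:capac} show is generically \emph{not} identically zero. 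If $G^*_{\gen}$ did vanish on $\J(S)$ then there would be no external field and Brolin's theorem would go through verbatim; the whole content of Theorem~\ref{T:main} is that this is not so. Your Riesz-decomposition fallback, ``$G^*_{\gen} = -U^{\mu_{\gen}} + h$ with $h$ harmonic determined by growth at $\infty$,'' has the implication going backwards: it presupposes that the Riesz mass of $G^*_{\gen}$ equals $\mu_{\gen}$, but that fact \emph{is} (essentially) the identity you are trying to prove, and it needs a separate argument.

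The paper supplies that argument differently and more directly. One computes the closed form
\[
U^{\mu_n}(z) = \frac{1}{D^n}\log\!\left(\prod_{l(g)=n}\frac{|\lead{g}|}{|g(z)-a|}\right),
\]
shows via Lemma~\ref{L:compu} that $\liminf_n U^{\mu_n}(z) = -G_{\gen}(z) + \tfrac{\log A}{D-N}$, applies the Principle of Descent and Lower Envelope Theorem (Result~\ref{R:PDLE}) to obtain $U^{\mu_{\gen}} \leq -G_{\gen} + \tfrac{\log A}{D-N}$ everywhere with equality quasi-everywhere, and then uses the continuity of $U^{\mu_{\gen}}$ (Theorem~\ref{T:conti}) together with the upper semicontinuity of $G^*_{\gen}$ to upgrade the q.e.\ equality to the pointwise identity $U^{\mu_{\gen}} = -G^*_{\gen} + \tfrac{\log A}{D-N}$ on all of $\C$; restricting to $\J(S)$ gives the Frostman equality and Result~\ref{R:frost} finishes. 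No vanishing of $G^*_{\gen}$ on $\J(S)$ is used or true. Two smaller points: admissibility requires the external field to be \emph{lower} semicontinuous, whereas $G^*_{\gen}$ is by construction \emph{upper} semicontinuous\,---\,continuity of $G^*_{\gen}$ (deduced from the identity above and continuity of $U^{\mu_{\gen}}$) is what resolves this, not the usc-regularization itself; and your heuristic for the critical-point hypothesis (``ruling out an atom at the exceptional point'') is not where it acts\,---\,$\mu_{\gen}$ never charges polar sets by Result~\ref{R:1st_DS_Res}; the hypothesis is used in the proof of Theorem~\ref{T:conti} to ensure that either Proposition~\ref{P:card} applies directly, or one may pass to the second compositional iterate $\gen^2$ (where $\sharp\critall(\gen^2)>1$) so that it does.
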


\begin{remark}\label{Rem:broad}
The condition stated in Theorem~\ref{T:main} is very mild. Finitely generated polynomial semigroups
that do \textbf{not} satisfy the condition stated are very exceptional\,---\,see Section~\ref{SS:excep}
for a precise discussion. If, for a finitely generated polynomial semigroup $S$, $\sharp\critall[S] = 1$ and
$\crit[S] \cap \exc(S)\neq \emptyset$, then it is unclear whether the logarithmic potential of the
measures $\mu_{\gen}$ (for any generating set $\gen$ of $S$) is continuous. That being said, the
behaviour of these potentials is not intractable. But since a completely different analysis would be
required to study these exceptional semigroups, we focus here on the semigroups addressed by 
Theorem~\ref{T:main}.
\end{remark}

If $P$ is a polynomial of degree at least 2 and we consider 
the iterative dynamics of $P$ (i.e., $\gen =\{P\}$ in our notation), it follows from
\cite{brolin:isuirm65} that $G_{\gen}$ restricted to
the unbounded component of the Fatou set of $P$ is the Green's function of the latter
with pole at $\infty$, and
the external field is identically $0$ in this case. This is the framework of classical potential
theory. From our remark above on the equilibrium measure (see Section~\ref{S:potential} for
a precise statement), and as $\mu_{\gen}$ here is the equilibrium measure
of the Julia set, we see that Brolin's theorem is subsumed by Theorem~\ref{T:main}.
\smallskip

We provide a very short sketch of the proof of Theorem~\ref{T:main} to point out some
features of it that are novel (in what follows, $D(z, r)$ denotes
the open disc with centre $z\in \C$ and radius $r$): 
\begin{itemize}[leftmargin=14pt]
\item We establish that the logarithmic potential of $\mu_{\gen}$ (let us denote it by 
$U^{\mu_{\gen}}$) is continuous. This is 
Theorem~\ref{T:conti}\,---\,which may be of independent interest.

\item We show that there is a constant $\alpha > 0$ such that for each
$z\in \C$, $\mu_{\gen}(D(z,r))\lesssim r^\alpha$ 
when $r > 0$ is sufficiently small.
The continuity of $U^{\mu_{\gen}}$ follows from this using a
characterization of the continuity of the logarithmic potential by Arsove
\cite[Theorem~1]{arsove:cplmd60}.

\item Using the continuity of $U^{\mu_{\gen}}$ we are able to show a very strong
relation between $U^{\mu_{\gen}}$ and the external field in Theorem~\ref{T:main},
from which this result follows.
\end{itemize}
We should mention that the proof of the power bound on $\mu_{\gen}(D(z,r))$ is
inspired by an argument by Bharali and Sridharan \cite[Section~5]{BhaSri:hcrfgrs17}.
However, their argument addresses only the case $\crit[S] = \emptyset$. We make a
careful analysis of the \textbf{local} orders at critical points to show
that the power bound on $\mu_{\gen}(D(z,r))$ can be
obtained even when $\crit[S]\neq \emptyset$. This is our 
Proposition~\ref{P:card}.%
\smallskip

With Theorem~\ref{T:main} at hand, one expects that a growing understanding of the external
field would lead to new information about dynamically interesting objects associated with $S$ as
in Theorem~\ref{T:main}. In this work, we focus on the (logarithmic) capacity of $\J(S)$. Our
next result shows why it is fundamentally \textbf{hard} to compute the capacity of $\J(S)$ for $S$
with more than one generator. To be specific: for the latter $S$, the analogue of the Robin constant for
$\J(S~\!\!=~\!\!\langle P\rangle)$, $P$ a polynomial, is the \emph{modified
Robin constant} (see Section~\ref{S:potential}), $F_{\gen}$, for the external field given by
Theorem~\ref{T:main}. Just like the Robin constant for
$\J(S~\!\!=~\!\!\langle P\rangle)$, $F_{\gen}$ is not hard to compute.
But the relationship between $F_{\gen}$ and capacity is badly
vitiated by the external field $G^*_{\gen}|_{\J(S)} =: Q_{\gen}$. One may ask: when
is $Q_{\gen}\not\equiv 0$? Theorem~\ref{T:bound} below provides a sufficient condition under
which $Q_{\gen}\not\equiv 0$. Actually, we conjecture that the condition~$(ii)$ in
Theorem~\ref{T:bound}-\ref{I:ext_field} is superfluous and that condition~$(i)$ is
necessary and sufficient for $Q_{\gen}\not\equiv 0$: refer to
Conjecture~\ref{Conj:capac} for a precise statement. We shall prove that:
\begin{itemize}[leftmargin=14pt]
\item The condition~$(i)$ in Theorem~\ref{T:bound}-\ref{I:ext_field} is a necessary
condition for $Q_{\gen}\not\equiv 0$ for some finite generating set $\gen$,
which is part of our evidence for Conjecture~\ref{Conj:capac}.

\item Under the conditions on $S$ alluded to, $F_{\gen}$ is always greater than
the Robin constant of $\J(S)$ (for any finite generating set $\gen$), which gives a
strict lower bound on the capacity of $\J(S)$.
\end{itemize}
\noindent{Some notation: $\lead{g}$ will denote the coefficient of the highest-degree
term of a polynomial $g$.}

\begin{theorem}\label{T:bound}
Let $S$ be a finitely generated polynomial semigroup having the properties stated
in Theorem~\ref{T:main}. For a set of generators 
$\gen=\{ g_1, g_2,\dots,g_N\}$ of $S$, let $Q_{\gen}$ denote the external field 
associated with $(S, \gen)$ given by Theorem~\ref{T:main}.
\begin{enumerate}[label=$(\alph*)$]
\item\label{I:ext_field} Assume that for some $z_0\in \J(S)$:
$(i)$~its orbit under $S$, $O(z_0)$, is unbounded, and $(ii)$~$O(z_0)$ is
not dense in $\C$. Then $Q_{\gen}\not\equiv 0$ for any finite set of generators $\gen$ of $S$.
\vspace{1mm}

\item\label{I:nec_cond} If $Q_{\gen}\not\equiv 0$ for some finite set of generators $\gen$
then there exists a point $z_0\in \J(S)$ such that $O(z_0)$ is unbounded.
\end{enumerate}
Moreover, if $S$ satisfies conditions~$(i)$ and~$(ii)$ and 
each element of $S$ is of degree at least 2 then
\begin{equation}\label{E:bound_cap}
\cpt(\J(S)) > \exp(-F_{\gen}),
\end{equation}
for any set of generators $\gen$ as above. Here
$F_{\gen}$ is the modified Robin constant for $Q_{\gen}$, and equals
$(D-N)^{-1}\log |\lead{g_1}\lead{g_2}\dots\lead{g_N}|$, where
$D:=\deg(g_1)+\deg(g_2)+\dots +\deg(g_N)$.
\end{theorem}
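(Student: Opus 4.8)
The plan is to handle the three assertions separately, the unifying idea being to work directly with the dynamical Green's function $G^*_{\gen}$ (and the basic facts about it established in the proof of Theorem~\ref{T:main}: it is a nonnegative subharmonic function on $\C$, harmonic off $\supp\mu_{\gen}\subseteq\J(S)$, with $G^*_{\gen}=F_{\gen}-U^{\mu_{\gen}}$) rather than with $\mu_{\gen}$ itself. A preliminary computation settles the explicit value of $F_{\gen}$: for $|z|$ large, $g(z)=\lead{g}z^{\deg g}+O(|z|^{\deg g-1})$ and $\sum_{l(g)=n}\deg g=D^n$, so $\sum_{l(g)=n}\log|g(z)-a|=\big(\sum_{l(g)=n}\log|\lead{g}|\big)+D^n\log|z|+o(D^n)$; using $\lead{g\circ h}=\lead{g}\,\lead{h}^{\deg g}$ one proves by induction that $\sum_{l(g)=n}\log|\lead{g}|=\tfrac{D^n-N^n}{D-N}\log|\lead{g_1}\cdots\lead{g_N}|$, whence $G^*_{\gen}(z)=\log|z|+F_{\gen}+o(1)$ as $z\to\infty$ with $F_{\gen}=(D-N)^{-1}\log|\lead{g_1}\cdots\lead{g_N}|$ (note $D>N$ automatically, as some generator has degree $\ge 2$).

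For part \ref{I:ext_field}, suppose $z_0\in\J(S)$ has $O(z_0)$ unbounded and not dense. Pick $g_0\in S$, of word length $\ell$, with $|g_0(z_0)|>\max(1,R)$, where $R$ is an escape radius for $S$ (so $|h(w)|\ge |w|^{\deg h}$ whenever $|w|\ge R$, $h\in S$; the required uniformity over $S$ is of the type underlying Proposition~\ref{P:card}). Since $O(z_0)\ne\C$ and the exceptional set entering the definition of $G_{\gen}$ is polar, choose $a\notin\overline{O(z_0)}$ avoiding that polar set and set $\delta=\operatorname{dist}(a,\overline{O(z_0)})>0$. Split the words of length $\ell+m$ into those of the form $w'\circ g_0$ and the rest: the former contribute $\prod_{l(w')=m}|w'(g_0(z_0))-a|\ge 2^{-N^m}|g_0(z_0)|^{D^m}$ (using $|w'(g_0(z_0))|\ge|g_0(z_0)|^{\deg w'}$ and $\sum_{l(w')=m}\deg w'=D^m$), while the remaining factors are each $\ge\delta$. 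Taking logarithms, dividing by $D^{\ell+m}$, and letting $m\to\infty$ gives $G_{\gen}(z_0)\ge D^{-\ell}\log|g_0(z_0)|>0$, hence $G^*_{\gen}(z_0)\ge G_{\gen}(z_0)>0$ and $Q_{\gen}\not\equiv 0$. Part \ref{I:nec_cond} is the contrapositive: if $O(z)$ is bounded for every $z\in\J(S)$, then no such $z$ admits $h(z)$ with $|h(z)|>R$, so $O(z)\subseteq\overline{D(0,R)}$ for \emph{all} $z\in\J(S)$ at once; taking $a\notin\overline{D(0,R)}$ makes every factor $|g(z)-a|$ uniformly bounded above and, after deleting the countable (hence polar) set $\{z:a\in\overline{O(z)}\}$, bounded below, forcing $G_{\gen}\le 0$ on $\J(S)$ off a polar set. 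Since on $\Omega$, the unbounded component of $\sph\setminus\J(S)$, the function $G^*_{\gen}$ is a nonnegative harmonic function agreeing with the Green's function of $\Omega$ with pole at $\infty$ and vanishing quasi-everywhere on $\partial\Omega$, upper semicontinuous regularization gives $G^*_{\gen}=0$ quasi-everywhere on $\J(S)$; a generalized maximum-principle argument on the bounded open set $\sph\setminus\overline\Omega$ (whose boundary lies in $\partial\Omega$) then yields $G^*_{\gen}\equiv 0$ on $\sph\setminus\Omega\supseteq\J(S)$, i.e.\ $Q_{\gen}\equiv 0$.

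For the capacity bound, write $V$ for the Robin constant of $\J(S)$, so $\cpt(\J(S))=e^{-V}$ and the classical equilibrium measure $\omega:=\omega_{\J(S)}$ satisfies $U^{\omega}=V$ quasi-everywhere on $\J(S)$ and is carried by $\partial\Omega$. Using $G^*_{\gen}=F_{\gen}-U^{\mu_{\gen}}$, Fubini, and $\int U^{\omega}\,d\mu_{\gen}=V$ (the set $\{U^{\omega}<V\}\cap\J(S)$ is polar, hence $\mu_{\gen}$-null), one gets the clean identity
\[
\int_{\J(S)} G^*_{\gen}\,d\omega \;=\; F_{\gen}-V .
\]
Since $G^*_{\gen}\ge 0$, this already gives $F_{\gen}\ge V$, i.e.\ $\cpt(\J(S))\ge\exp(-F_{\gen})$. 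To get strict inequality under hypotheses (i)--(ii), suppose $F_{\gen}=V$. Then $G^*_{\gen}=0$ $\omega$-almost everywhere on $\partial\Omega$; comparing the bounded harmonic function $G^*_{\gen}-g_{\Omega}(\cdot,\infty)$ on $\Omega$ (which $\to F_{\gen}-V=0$ at $\infty$ and has vanishing boundary values a.e.) forces $G^*_{\gen}\equiv g_{\Omega}(\cdot,\infty)$ on $\Omega$; propagating the boundary value $0$ inward by the maximum principle as in part \ref{I:nec_cond} gives $G^*_{\gen}\equiv 0$ on $\sph\setminus\Omega\supseteq\J(S)$, contradicting $Q_{\gen}\not\equiv 0$ (which holds by part \ref{I:ext_field}). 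Hence $F_{\gen}>V$ and \eqref{E:bound_cap} follows, with $F_{\gen}$ the modified Robin constant computed above.

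The routine parts are the uniform estimates near $\infty$ (of the type already appearing in \cite{BhaSri:hcrfgrs17} and Proposition~\ref{P:card}) and the inductive computation of $F_{\gen}$. The main obstacle I anticipate is the strictness step for the capacity bound: turning the qualitative fact $Q_{\gen}\not\equiv 0$ into the quantitative conclusion $\int_{\J(S)}G^*_{\gen}\,d\omega>0$, i.e.\ making the maximum-principle propagation of the boundary value $0$ from $\partial\Omega$ into $\sph\setminus\Omega$ genuinely rigorous in the presence of polar exceptional sets and of bounded complementary components of $\J(S)$; this is exactly the point where one must argue that a point with unbounded orbit cannot be "hidden" from the outer boundary, and where the hypothesis that every element has degree at least $2$ (ensuring $\J(S)$ compact with positive capacity and the requisite expansion near $\infty$) is used.
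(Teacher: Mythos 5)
Your overall scheme tracks the paper's quite closely, but you rederive the crucial identity rather than invoking the $F$-functional machinery, and you organize the strictness step as a contradiction rather than by directly locating a point of positivity; both are legitimate variants. Your identity $\int_{\J(S)}G^*_{\gen}\,d\omega = F_{\gen}-V$ (obtained by Fubini and the facts that $U^{\omega}=V$ q.e.\ on $\J(S)$ and that $\mu_{\gen}$ charges no polar set) is precisely the content of Result~\ref{R:ms}(b) once one recalls $\mathcal S_{w_{\gen}}=\supp(\mu_{\gen})=\J(S)$; so you and the paper arrive at the same place, you via mutual energies, the paper via Mhaskar--Saff. Parts~\ref{I:ext_field} and~\ref{I:nec_cond} are handled with essentially the same escape-radius and bounded-orbit arguments as the paper, with one welcome shortcut on your side: in~\ref{I:nec_cond} you observe directly that $O(z)$ bounded forces $O(z)\subset \overline{D(0,R)}$ for the fixed escape radius $R$, whereas the paper routes this through Lemma~\ref{L:compact}.

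Two steps in your write-up are asserted rather than argued and should be shored up. First, you declare $G^*_{\gen}\ge 0$ globally on $\C$; this is neither proved in the paper nor obvious, since $G^*_{\gen}=F_{\gen}-U^{\mu_{\gen}}$ and there is no a priori reason why $U^{\mu_{\gen}}\le F_{\gen}$ inside $\J(S)$. What the proof actually needs (and what the paper establishes with a short argument) is $G^*_{\gen}\ge 0$ on the unbounded Fatou component $V$ (via the escape rate estimate and Result~\ref{R:boyd04}), and then $\ge 0$ on $\partial_e\J(S)$ by continuity. That suffices both for $\int\Q\,d\omega\ge 0$ and for the contradiction step, provided one concludes only $G^*_{\gen}\le 0$ on $\sph\setminus\Omega$ from the maximum principle and then contradicts the strict positivity at $z_0$ from part~\ref{I:ext_field} -- you do not actually need the $\equiv 0$ you wrote. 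Second, your argument tacitly uses $\supp(\omega)=\partial_e\J(S)$ (to pass from ``$\Q=0$ $\omega$-a.e.'' to ``$\Q=0$ on $\partial_e\J(S)$'' via continuity); this is true but requires a proof, which the paper supplies via uniform perfectness of $\J(S)$, regularity of $V$, and the identity principle for $U^{\sigma_{\J(S)}}$ on $V\setminus\{\infty\}$. Finally, the Green's-function identification you insert into part~\ref{I:nec_cond} is a detour (and not correct in general, since $G^*_{\gen}$ has the wrong Robin constant whenever $\Q\not\equiv 0$); the paper's route is simply to use \eqref{E:LE2} to get $\Q=0$ q.e.\ and then continuity plus uniform perfectness. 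With the nonnegativity on $\partial_e\J(S)$ and $\supp(\omega)=\partial_e\J(S)$ supplied, the rest of your argument closes.
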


As discussed above, we have some evidence to propose the following 

\begin{conjecture}\label{Conj:capac}
Let $S$ be a finitely generated polynomial semigroup. For a finite set of generators 
$\gen$ of $S$, let $Q_{\gen}$ denote the external field 
associated with $(S, \gen)$ given by Theorem~\ref{T:main}. Then, the following are equivalent:
\begin{enumerate}[label=$(\alph*)$]
\item For some point $z_0\in \J(S)$, its orbit under $S$ is unbounded.

\item $Q_{\gen}\not\equiv 0$ for some finite set of generators $\gen$ of $S$.

\item $Q_{\gen}\not\equiv 0$ for every finite set of generators $\gen$ of $S$.
\end{enumerate}
\end{conjecture}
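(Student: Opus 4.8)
The common thread is the representation $G^*_{\gen}=c_{\gen}-U^{\mu_{\gen}}$, where $c_{\gen}=\lim_{n}D^{-n}\log\big|\text{leading coefficient of }\prod_{l(g)=n}(g(\cdot)-a)\big|$ and $D=d_1(\Gamma_{\gen})=\sum_{i}\deg g_i$. That leading coefficient is $\prod_{l(g)=n}\lead{g}$, and the recursion $\sum_{l(g)=n}\log|\lead{g}|=N\sum_{l(g)=n-1}\log|\lead{g}|+D^{n-1}\log|\lead{g_1}\cdots\lead{g_N}|$ gives $\sum_{l(g)=n}\log|\lead{g}|=\big(\sum_{j=0}^{n-1}N^{n-1-j}D^{j}\big)\log|\lead{g_1}\cdots\lead{g_N}|$; dividing by $D^{n}$ and letting $n\to\infty$ (using $N<D$) yields $c_{\gen}=(D-N)^{-1}\log|\lead{g_1}\cdots\lead{g_N}|$. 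The Frostman identity of Theorem~\ref{T:main}, $U^{\mu_{\gen}}+Q_{\gen}=F_{\gen}$ q.e.\ on $\supp\mu_{\gen}$, combined with $Q_{\gen}=G^*_{\gen}|_{\J(S)}=(c_{\gen}-U^{\mu_{\gen}})|_{\J(S)}$, forces $F_{\gen}=c_{\gen}$, which is the closing formula. Finally, the Dinh--Sibony invariance $\sum_i g_i^{*}\mu_{\gen}=D\,\mu_{\gen}$ together with $U^{g^{*}\mu}=U^{\mu}\!\circ g+\log|\lead{g}|$ gives the exact functional equation
\[
\textstyle\sum_{i=1}^{N}G^*_{\gen}(g_i(z))=D\,G^*_{\gen}(z)\qquad(z\in\C),
\]
which I use twice below.

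\emph{Part (a).} Hypothesis (ii) gives $w_0\in\C$ and $r>0$ with $O(z_0)\cap D(w_0,r)=\emptyset$; since $G^*_{\gen}$ is independent of the base point $a$ (Remark~\ref{Rem:independence}), take $a\in D(w_0,r/2)$ outside the relevant polar set, so $|g(z_0)-a|\ge r/2$ for \emph{every} $g\in S$. By (i) pick $h\in S$ with $|h(z_0)|$ arbitrarily large. Split $D^{-n}\sum_{l(g)=n}\log|g(z_0)-a|$ into the words $g'\circ h$ and the rest: the first block is $D^{-\ell(h)}$ times $D^{-(n-\ell(h))}\sum_{l(g')=n-\ell(h)}\log|g'(h(z_0))-a|$, which, via the uniform bound $\log|g(y)|\ge(\deg g)(\log|y|-C_{\gen})$ for $|y|$ past an escape radius of $S$ (here $C_{\gen}$ bounds $-(\deg g)^{-1}\log|\lead{g}|$ over $g\in S$, finite because $\deg g=\prod\deg g_{i_j}$ grows geometrically), is $\ge D^{-\ell(h)}(\log|h(z_0)|-C_{\gen})-o(1)$, while the remaining words contribute at least $(N/D)^{n}\log(r/2)\to0$. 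Letting $n\to\infty$ and then $|h(z_0)|\to\infty$ gives $G_{\gen}(z_0)>0$, so $G^*_{\gen}(z_0)\ge G_{\gen}(z_0)>0$, and since $z_0\in\J(S)$ this means $Q_{\gen}\not\equiv0$ for any finite generating set.

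\emph{Part (b).} Suppose $Q_{\gen}\not\equiv0$ and fix $z_1\in\J(S)$ with $G^*_{\gen}(z_1)>0$. Since $\max_i G^*_{\gen}(g_i(w))\ge\frac1N\sum_i G^*_{\gen}(g_i(w))=\frac DN G^*_{\gen}(w)$, we can recursively choose generators to get $w_0=z_1$, $w_{k+1}=g_{i_{k+1}}(w_k)\in O(z_1)$ with $G^*_{\gen}(w_k)\ge(D/N)^{k}G^*_{\gen}(z_1)\to\infty$. As $G^*_{\gen}$ is continuous with $G^*_{\gen}(z)=\log|z|+c_{\gen}+o(1)$ near $\infty$, hence bounded on bounded sets, $|w_k|\to\infty$, so $O(z_1)$ is unbounded with $z_1\in\J(S)$.

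\emph{The capacity bound.} Let $\Omega_{\infty}$ be the unbounded component of $\C\setminus\J(S)$ and $V_{\J(S)}=-\log\cpt(\J(S))$. As $\supp\mu_{\gen}\subseteq\J(S)$, $U^{\mu_{\gen}}$ and hence $G^*_{\gen}$ are harmonic on $\Omega_{\infty}$, with $G^*_{\gen}=\log|z|+c_{\gen}+o(1)$ at $\infty$; comparing with the Green's function $g_{\Omega_{\infty}}(\cdot,\infty)=\log|z|+V_{\J(S)}+o(1)$, the difference is harmonic across $\infty$ with value $F_{\gen}-V_{\J(S)}$ and boundary values $G^*_{\gen}|_{\partial\Omega_{\infty}}=Q_{\gen}$, so evaluating at $\infty$ through harmonic measure (which on $\partial\Omega_{\infty}$ is the equilibrium measure $\omega_{\J(S)}$ of $\J(S)$) gives $F_{\gen}-V_{\J(S)}=\int Q_{\gen}\,d\omega_{\J(S)}$. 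Moreover $G^*_{\gen}\ge0$ everywhere: otherwise $\{G^*_{\gen}<0\}$ is a nonempty bounded open set, and the functional equation — selecting at each step the generator minimizing $G^*_{\gen}(g_i(\cdot))$ — produces an orbit inside it along which $G^*_{\gen}$ is multiplied by a factor $\ge D/N>1$, forcing $G^*_{\gen}\to-\infty$ in a compact set, a contradiction. Thus $\int Q_{\gen}\,d\omega_{\J(S)}\ge0$; under (i)--(ii), part (a) gives $G^*_{\gen}(z_0)>0$, and since $G^*_{\gen}$ is subharmonic on $\C$ its maximum over the polynomial hull $\C\setminus\Omega_{\infty}$ of $\J(S)$ is attained on $\partial\Omega_{\infty}$, so $G^*_{\gen}>0$ on a nonempty relatively open subset of $\partial\Omega_{\infty}$; the hypothesis that every element of $S$ has degree at least $2$ is used to ensure $\partial\Omega_{\infty}$ is non-thin there, so that this set carries positive $\omega_{\J(S)}$-mass, whence $F_{\gen}-V_{\J(S)}>0$, i.e.\ $\cpt(\J(S))>e^{-F_{\gen}}$. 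I expect this last step — certifying that the positivity set of $G^*_{\gen}$ on $\partial\Omega_{\infty}$ is detected by harmonic measure — to be the real obstacle; it is precisely here that the degree-$\ge2$ hypothesis (and, upstream, hypothesis (ii) feeding part (a)) enters, whereas Conjecture~\ref{Conj:capac} predicts it should be unnecessary.
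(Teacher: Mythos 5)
The statement you addressed is the paper's Conjecture~\ref{Conj:capac}, and the paper does \emph{not} prove it---it offers Theorem~\ref{T:bound} as partial evidence and explicitly speculates (in Section~\ref{S:intro}) that condition~$(ii)$ of Theorem~\ref{T:bound}-\ref{I:ext_field} is superfluous. Your proposal likewise stops short of a proof: your Part~(a) still invokes hypothesis~$(ii)$ (non-density of the orbit), and you say yourself at the end that this is ``the real obstacle.'' So, like the paper, you establish only $(a)+(ii)\Rightarrow(c)$ and $(b)\Rightarrow(a)$; the conjectured unconditional equivalence remains open, and you have not closed it.

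What you contribute that is genuinely different is the route to $(b)\Rightarrow(a)$. The paper's Theorem~\ref{T:bound}-\ref{I:nec_cond} proceeds by contrapositive: assuming all orbits in $\J(S)$ are bounded, it bounds $\prod_{l(g)=n}|g(z)-\fixed|$ uniformly, shows $G_{\gen}\equiv 0$ on $\J(S)$, and upgrades this to $\Q\equiv 0$ using uniform perfectness and the continuity of $\Q$. You instead extract the functional equation
\[
\sum\nolimits_{i=1}^{N}G^*_{\gen}\bigl(g_i(z)\bigr)=D\,G^*_{\gen}(z),\qquad z\in\C,
\]
from the composition rule $U^{g^*\mu}=U^{\mu}\circ g+\log|\lead{g}|$ together with the pullback invariance $\sum_i g_i^{*}\mu_{\gen}=D\mu_{\gen}$, and run a direct pigeonhole: from any $z_1$ with $G^*_{\gen}(z_1)>0$, pick at each step the generator maximizing $G^*_{\gen}(g_i(\cdot))$, so the value grows by the factor $D/N>1$; since $G^*_{\gen}$ is continuous (hence bounded on compacts) by Theorem~\ref{T:conti} and \eqref{E:equality_on_C}, the orbit must escape to $\infty$. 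This is shorter, more conceptual, and converts a qualitative dichotomy into an explicit escape construction. Two caveats are worth stating explicitly: the invariance $\sum_i g_i^*\mu_\gen = D\mu_\gen$ is a genuine Dinh--Sibony fact but is nowhere stated or established in this paper, so you are importing an external ingredient; and the step $Q_\gen\not\equiv 0 \Longrightarrow \exists\,z_1\in\J(S),\;G^*_\gen(z_1)>0$ silently presupposes $G^*_\gen\geq 0$ on $\J(S)$, which you only argue later (via the same functional equation, with min-selection) in your ``capacity bound'' section---that argument is correct but needs to be moved so that it precedes Part~(b).

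Finally, your concluding ``capacity bound'' paragraph re-derives inequality~\eqref{E:bound_cap}, which belongs to Theorem~\ref{T:bound}, not to Conjecture~\ref{Conj:capac}; it is not part of the statement under review.
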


We conclude this section with another indication of future research. To this end, we mention a recent work
by Dinh \emph{et al.} \cite{DinhKaufWu:prmdpv20}, with which this work shares some common features.
In \cite{DinhKaufWu:prmdpv20}, the authors revisit the problem in random matrix theory of determining 
the asymptotic behaviour of the random products $s_n\cdots s_1$, $n=1,2,3,\dots,$ where $s_1, s_2, s_3, \dots$
are sampled independently and identically from ${\rm SL}_2(\C)$ relative to a non-elementary probability measure $\mu$
on ${\rm SL}_2(\C)$.
The approach
in \cite{DinhKaufWu:prmdpv20} involves interpreting each $s_n\in\rm SL_2(\C)$ as an element in ${\rm Aut}(\sph)$.
The problem considered leads to the study of the dynamical properties of a ``generalized correspondence''\,---\,in the sense
that if $\supp(\mu)$ were a \textbf{finite} set then the latter object would be a correspondence of the form
\eqref{E:stdForm}. This suggests
a class of problems on $S$\,---\,$S$ as in Theorems~\ref{T:main} and~\ref{T:bound}\,---\,wherein
one considers \emph{random} compositions in $S$ by endowing $S$ with a probability measure $\mu$ analogous to
those considered in \cite{DinhKaufWu:prmdpv20}. The analytical methods in \cite{DinhKaufWu:prmdpv20} 
suggest, for instance, a way to describe the asymptotic behaviour of random compositions in the
higher-degree setting.

\section{Classical notions on rational semigroups}\label{S:def}
For a rational semigroup $S$, the \emph{Fatou set} of $S$\,---\,which we denote by 
$\F(S)$\,---\,is the largest open subset of $\sph$ on which functions in $S$ form a 
normal family. The \emph{Julia set} of $S$, $\J(S)$, is the complement of 
$\F(S)$: i.e., $\J(S):=\sph \setminus \F(S)$. 
If $S$ is generated by a set
$\gen$ then we write $S=\langle f: f \in \gen \rangle$.
If $S$ is generated by a single rational
function $f$ then we abbreviate $\F(\langle f \rangle)$ and
$\J(\langle f \rangle)$ to  $\F(f)$ and $\J(f)$, respectively.
\smallskip
 
If a rational semigroup $S$ contains a function of degree at least 2 then we 
can say more about $\J(S)$.

\begin{result}[Hinkkanen--Martin, \cite{HinkMart:dsrf96}]\label{R:HM}
Let $S$ be a rational semigroup containing at least one function of degree at least 2. Then
the following hold:
\begin{enumerate}[label=$(\alph*)$]
\item The set of all repelling fixed points of all the elements of $S$ is dense in $\J(S)$.  
\item The Julia set of $S$ satisfies 
       \[\J(S)=\overline{\bigcup_{f\in S} \J(f)}.\]
\end{enumerate}
\end{result}

Let $S$ be a rational semigroup and let $\gen$ 
be a set of generators (or a generating set) of $S$. A \emph{word} will refer to any composition 
$f_{i_n} \circ \dots \circ f_{i_1}$, $n\in \Z_+$, where 
$f_{i_1}$, $\dots$, $f_{i_n} \in \gen$.
We shall call $n$ the \emph{length} of the 
\textbf{word} $f_{i_n} \circ \dots \circ f_{i_1}$.
For $f \in S$, the expression $l(f)=n$ is the shorthand for the following implication:
\[
l(f)=n \implies  \exists \ f_{i_1}, \dots, f_{i_n} \in \gen \text{ \  such that \ }
f=f_{i_n} \circ \dots \circ f_{i_1}.
\]
In the above expression, the word
$f_{i_n} \circ \dots \circ f_{i_1}$ is called as a \emph{representation} of $f$.
Note that for $f \in S$ there could be more than one representation of $f$. 

\begin{definition}\label{D:filled}
Let $S$ be a polynomial semigroup. The $\emph{filled-in Julia set}$ of $S$, denoted by 
$\filled(S)$, is the set
\[
\filled(S):=\{ z \in \C: O(z) \text{ has finite limit point}\},
\]
where $O(z):= \{f(z): f \in S\}$ denotes the orbit of $z \in \C$. We shall call the complement of 
$\filled(S)$, denoted by $\basin(S)=\sph\setminus \filled(S)$, the \emph{basin of attraction} 
of $\infty$ for $S$. 
\end{definition}

A couple of observations are in order. First: we recall that the expression $O(z)$
\emph{has finite limit point} means that there exists a
sequence $\{h_n\}$ in $S$ consisting of \textbf{distinct} elements of $S$ such that 
$\{h_n(z)\}$ converges to some point in $\C$. Second: it is not
immediate from the above definition why $\basin(S)$ is
called a ``basin of attraction''. The terminology is made clearer by
Lemma~\ref{L:compact} below, which provides an alternative description
of $\basin(S)$.
\smallskip

Julia sets of finitely generated rational semigroups have an interesting property
that we will need in our proof of Theorem~\ref{T:bound}. We first state this property
and then the pertinent result.
  
\begin{definition}\label{D:unif_perf}
Let $\Sigma$ be a closed subset of $\sph$. We say that $\Sigma$ is \emph{uniformly perfect}
if $\Sigma$ contains at least two points and there exists a number $M\in (0, \infty)$ such that
for any conformal annulus $\mathcal{A}\subset \sph$ that separates $\Sigma$ (which
means:
\begin{itemize}[leftmargin=14pt]
\item $\mathcal{A}\cap \Sigma = \emptyset$, and
\item $\Sigma$ intersects both the connected components of $\sph\setminus \mathcal{A}$)
\end{itemize}
the modulus of $\mathcal{A}$ is at most $M$.
\end{definition}

\begin{remark}\label{Rem:unif_perf_planar}
Let $\Sigma$ be a non-empty closed subset of $\C$. It is a classical fact\,---\,which follows
from the work of Pommerenke \cite{pommerenke:upsFg84}\,---\,that uniform perfectness is equivalent
to the following property: there exists a number
$c\in (0,1)$ such that for each $z\in \Sigma$ and each $r\in (0, {\rm diam}(\Sigma))$,
\[
 \Sigma\cap \{w\in \C : cr\leq |w-z|\leq r\}\neq \emptyset.
\]
Here, ${\rm diam}(\Sigma)$ denotes the diameter of $\Sigma$, and takes values
in $(0, \infty]$.
\end{remark}

\begin{result}[paraphrasing of {\cite[Theorem~3.1]{stankewitz:upsrsKg2000}}]\label{R:uniper}
Let $S$ be a finitely generated rational semigroup such that $J(S)$ has at least three points.
Then $\J(S)$ is uniformly perfect.
\end{result}

\section{Structural lemmas for polynomial semigroups}\label{S:structural}
In this section, we present certain lemmas about polynomial semigroups that will be of
use in later sections in this work. Recall the definition of a polynomial semigroup from
Section~\ref{S:intro}: such a semigroup
contains at least one polynomial of degree at least 2.
For a polynomial semigroup $S$, if $\gen$ is a set of generators then $\gen$ must contain 
at least one element of degree at least 2. This fact will be used implicitly, with no further 
comment, in the rest of this paper.
\smallskip

If $g (z)=az+b$ is a polynomial such that $|a|>1$ then $\J(g)=\{{b}/{(1-a)}\}$.
If $p$ is a polynomial of degree at least 2 then $\infty$ is a superattracting fixed point and
$\infty \in \F(g)$. This information is not enough for concluding whether $\infty \in \F(S)$.
But if we have a \textbf{finitely generated} polynomial semigroup then $\infty \in \F(S)$. 
To prove this result, we need a lemma, which is interesting in its own right.
\smallskip

\begin{lemma}\label{L:repre}
Let $S$ be a finitely generated polynomial semigroup and let $\gen=\{ g_1, g_2,\dots,g_N\}$ 
be a set of generators of $S$. For every $g \in S$, there are at most finitely many
representations of $g$ in terms of elements of $\gen$.
\end{lemma}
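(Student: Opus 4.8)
The plan is to prove something slightly stronger: the \emph{length} of any representation of a fixed $g\in S$ is bounded above by a quantity depending only on $\deg(g)$, $|\lead{g}|$, and the generating set $\gen$. Since there are at most $N^n$ words of length $n$ over the alphabet $\gen=\{g_1,\dots,g_N\}$, such a bound immediately yields the assertion. First I would record how $\deg$ and $\lead{\,\cdot\,}$ behave under composition: for non-constant polynomials $\phi,\psi$ one has $\deg(\phi\circ\psi)=\deg\phi\cdot\deg\psi$ and $\lead{\phi\circ\psi}=\lead{\phi}\cdot\lead{\psi}^{\deg\phi}$. Applying these repeatedly along a representation $g=f_{i_n}\circ\cdots\circ f_{i_1}$, with $f_{i_j}\in\gen$ and $d_j:=\deg(f_{i_j})$, gives
\[
\deg(g)=\prod_{j=1}^n d_j,\qquad
\lead{g}=\prod_{j=1}^n \lead{f_{i_j}}^{\,D_j},\qquad
D_j:=\prod_{l=j+1}^n d_l .
\]
The two features of the exponents that drive the argument are $D_j\ge 1$ for all $j$ (the product is empty when $j=n$) and $D_j=\deg(g)\big/\prod_{l\le j}d_l\le\deg(g)$.

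Next I would split the factors of a representation according to degree. Let $k:=\#\{\,j:d_j\ge 2\,\}$ and $m:=\#\{\,j:d_j=1\,\}$, so the length is $n=k+m$. From $\deg(g)=\prod_j d_j\ge 2^{k}$ we get $k\le\log_2\deg(g)$, so $k$ is controlled outright. Bounding $m$ is the crux, and for this I would take $\log|\,\cdot\,|$ in the formula for $\lead{g}$ and separate the contributions of the degree-one and the higher-degree factors:
\[
\log|\lead{g}|=\sum_{j:\,d_j=1}D_j\log|\lead{f_{i_j}}|\;+\;\sum_{j:\,d_j\ge 2}D_j\log|\lead{f_{i_j}}| .
\]
Here the hypothesis built into the definition of a polynomial semigroup enters decisively: a degree-one element of $S$ is an affine map $z\mapsto az+b$ whose fixed point at $\infty$ is attracting, i.e.\ $|a|=|\lead{}|>1$. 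Hence, with $\delta:=\min\{\,|\lead{g_i}|:g_i\in\gen,\ \deg(g_i)=1\,\}>1$, every term of the first sum is $\ge D_j\log\delta\ge\log\delta>0$, so that sum is $\ge m\log\delta$. For the second sum set $\delta':=\min\{\,|\lead{g_i}|:g_i\in\gen,\ \deg(g_i)\ge 2\,\}>0$; since $D_j\le\deg(g)$, each of its at most $k$ terms is $\ge-\deg(g)\,|\log\delta'|$. Writing $C:=\deg(g)\,|\log\delta'|$, we obtain $\log|\lead{g}|\ge m\log\delta-kC$, hence
\[
m\;\le\;\frac{\log|\lead{g}|+C\log_2\deg(g)}{\log\delta}.
\]

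Consequently $n=m+k$ is bounded by a quantity depending only on $\deg(g)$, $|\lead{g}|$ and the fixed data $\delta,\delta'$ of $\gen$, and the proof concludes as announced. The only point that requires genuine care is the bookkeeping in the last display: a higher-degree generator with $|\lead{}|<1$ contributes a \emph{negative} amount to $\log|\lead{g}|$, and the exponent $D_j$ in front of it can be large\,---\,but never larger than $\deg(g)$, and there are at most $\log_2\deg(g)$ such factors, so these negative terms cannot swamp the strictly positive contributions of the (potentially unboundedly many) degree-one factors. If $\gen$ happens to contain no degree-one element, the whole argument degenerates to the obvious bound $n\le\log_2\deg(g)$.
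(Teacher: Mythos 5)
Your proposal is correct and follows essentially the same approach as the paper: both arguments exploit the leading-coefficient formula for compositions together with the two decisive facts that the number of degree-$\geq 2$ factors in any representation of $g$ is bounded in terms of $\deg(g)$, and that every degree-one element of a polynomial semigroup has $|\lead{\,\cdot\,}|>1$ because $\infty$ is an attracting fixed point, which forces the number of degree-one factors to be bounded as well. The only cosmetic difference is that the paper bounds the exponents of the degree-$\geq 2$ factors a bit more sharply (by $1+d+\cdots+d^{m-1}$ rather than by $\deg(g)$ per factor), whereas you produce a cleaner explicit upper bound on the word length directly; both yield the same finiteness conclusion.
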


\begin{proof}
Fix $g \in S$. There is a finite number, say $m$, such that in any representation of $g$,
the number of elements of $\gen$ with degree at least 2 does not exceed $m$.
Thus, if every element of $\gen$ is of degree at least 2 then the result follows. We now consider
the case when there are degree one elements in $\gen$. Then, let
\[
\begin{split}
\Lambda_1 &:=\min \{|\lead{h}|:h \in \gen, \  \deg(h)=1\},\\
\Lambda_2 &:=\min \{|\lead{h}|:h \in \gen, \  \deg(h)\geq 2\},\\
d &:=\max\{\deg(h): h \in \gen\}.
\end{split}
\]
Note that $\Lambda_1 >1$ and $\Lambda_2 >0$.
\smallskip

Consider a word $g_{i_n} \circ \dots \circ g_{i_1}$ with at most 
$m$ elements with degree at least 2 and $n>m$. Note that
\[
\lead{g_{i_n} \circ \dots \circ g_{i_1}}=\lead{g_{i_n}}{\lead{g_{i_{n-1}}}}^{d_{i_n}} 
\dots \lead{g_{i_1}}^{d_{i_2} \dots d_{i_{n}}},
\]
where $d_{i_j}= \deg (g_{i_j})$. It is easy to see that if $\Lambda_2 <1$ then
\[
|\lead{g_{i_n} \circ \dots \circ g_{i_1}}| 
\geq {\Lambda_1}^{n-m}{\Lambda_2}^{1+d+\dots+d^{m-1}}.
\]
Similarly, if $\Lambda_2 \geq 1$ then
\[
|\lead{g_{i_n} \circ \dots \circ g_{i_1}}| \geq {\Lambda_1}^{n-m}.
\]
In both cases, the right hand sides of the above inequalities approach $\infty$ as 
$n\rightarrow \infty$. So there exists $n_0\in \Z_+$ such that
(here $g \in S$ is as fixed above)
\begin{equation}\label{E:repre}
|\lead{g_{i_n} \circ \dots \circ g_{i_1}}| > |\lead{g}| \quad \forall n \geq n_0.
\end{equation}
If $g_{i_n} \circ \dots \circ g_{i_1}$ is a representation of $g$
then $\lead{g_{i_n} \circ \dots \circ g_{i_1}} = \lead{g}.$ 
Therefore, by \eqref{E:repre}, $n <n_0$. Since $\gen$ is finite, there are at most 
finitely many words with length $\leq n_0$.
Thus there are at most finitely many representations of $g$ in terms of elements of 
the generating set $\gen$.
\end{proof}

Let $x\in \basin(S)$. By definition, either $x=\infty$ or $O(x)$ does not have finite limit points
in the sense of the explanation following Definition~\ref{D:filled}. 
Thus, given $r>0$, $\{g\in S: |g(x)| \leq r\}$ is a finite set. Owing to Lemma~\ref{L:repre}, 
for $g \in S$, there at most finitely many representations. 
Thus there exists $n_r(x) \in \Z_+$ such that if $l(g)\geq n_r(x)$ then
$|g(x)| > r$.

\begin{lemma}\label{L:compact}
Let $S$ be a finitely generated polynomial semigroup and let $\gen=\{ g_1, g_2,\dots,g_N\}$ 
be a set of generators of $S$. Then:
\begin{enumerate}[label=$(\alph*)$]
\item\label{I:large_R} There exist constants $M>1$ and $R>0$ such that, for each
$i \in \{1, \dots, N\}$, $|g_i(z)|> M|z|$ whenever $|z|>R$.
\item If we set $\lar:=\{|z|>R\} \cup \{\infty\}$ and define
\[
\inv:=\bigcup_{n=1}^{\infty}\bigg( \bigcap_{l(g)=n}g^{-1}(\lar) \bigg),
\]
then $\inv$ does not depend on the choice $R > 0$ in the definition of
$\lar$ for any $R>0$ and $M>1$ for which the conclusion of \ref{I:large_R} holds true.
\item $\basin(S)=\inv$ and, therefore, $\filled(S)$ is a compact subset of $\C$.
\end{enumerate}
\end{lemma}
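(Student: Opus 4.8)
The plan is to prove the three parts in order, with part~\ref{I:large_R} being essentially elementary and parts (b)--(c) being where the real content lies.

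For part~\ref{I:large_R}: each $g_i$ is a polynomial, and since $\gen$ contains only polynomials while every degree-one element of $S$ (hence of $\gen$) has an attracting fixed point at $\infty$, the leading coefficient of any degree-one $g_i$ has modulus $>1$. So for each $i$ there is a ratio $|g_i(z)|/|z| \to |\lead{g_i}|\cdot\infty$ if $\deg g_i \ge 2$, and $\to |\lead{g_i}| > 1$ if $\deg g_i = 1$; in either case $\liminf_{|z|\to\infty} |g_i(z)|/|z| > 1$. Choosing $M \in (1, \min_i \liminf_{|z|\to\infty}|g_i(z)|/|z|)$ and then $R$ large enough that $|g_i(z)| > M|z|$ for all $i$ and all $|z| > R$ gives the claim. (I would also record that, enlarging $R$ if necessary, $\lar = \{|z| > R\}\cup\{\infty\}$ is forward-invariant under each $g_i$, since $|g_i(z)| > M|z| > |z| > R$.)

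For part (b): fix $R, M$ as in~\ref{I:large_R} and also $R', M'$; let $\inv$, $\inv'$ be the corresponding sets. By symmetry it suffices to show $\inv \subseteq \inv'$, and since enlarging $R$ only shrinks $\lar$, WLOG $R' \ge R$. The key point is a uniform escape estimate: if $z \in \lar$ (the larger-radius set with radius $R$) then for every word $g$ of length $n$ we have $|g(z)| > M^n |z| > M^n R$ — this is an immediate induction using forward-invariance of $\lar$ under each generator and the inequality in~\ref{I:large_R}. Hence if $z$ lies in $\bigcap_{l(g)=n} g^{-1}(\lar)$ for some $n$, then every word $w$ of length $m \ge n$ can be written $w = w_2 \circ w_1$ with $l(w_1) = n$, so $w_1(z) \in \lar$ and then $|w(z)| = |w_2(w_1(z))| > M^{m-n}|w_1(z)| > M^{m-n}\cdot M^n R = M^m R$. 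Taking $m$ large enough that $M^m R > R'$ shows $z \in \bigcap_{l(g)=m} g^{-1}(\lar')$, so $z \in \inv'$. The cases $z = \infty$ are handled by the convention that all generators fix $\infty$.

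For part (c): I would show $\basin(S) = \inv$ by double inclusion. ($\supseteq$) If $z \in \inv$, then by the escape estimate in part (b) there is $n$ with $|g(z)| > M^{l(g)-n}R \to \infty$ as $l(g) \to \infty$ over words $g$; hence only finitely many $g \in S$ satisfy $|g(z)| \le r$ for any fixed $r$ (using Lemma~\ref{L:repre} to pass from words to elements of $S$), so $O(z)$ has no finite limit point, i.e. $z \in \basin(S)$; and $\infty \in \inv \cap \basin(S)$ trivially. ($\subseteq$) Conversely suppose $z \in \basin(S)$, $z \neq \infty$. By the remark preceding the lemma (the consequence of Lemma~\ref{L:repre}), there is $n_R(z)$ such that $l(g) \ge n_R(z)$ implies $|g(z)| > R$, i.e. $g(z) \in \lar$; taking $n = n_R(z)$ gives $z \in \bigcap_{l(g)=n} g^{-1}(\lar)$, so $z \in \inv$. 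Finally, $\filled(S) = \sph \setminus \basin(S) = \sph \setminus \inv$. Since $\inv$ is open (each $g$ is continuous on $\sph$, $\lar$ is open, finite intersections and countable unions of open sets are open) and contains $\infty$, its complement $\filled(S)$ is closed and bounded away from $\infty$, hence a compact subset of $\C$.

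The main obstacle is really just organizing part (b) cleanly: one must be careful that the definition of $\inv$ uses the \emph{intersection over all words of length exactly $n$} (not a single orbit), and that the escape estimate is uniform in the word, so that a single power $M^m$ beats the new threshold $R'$ simultaneously for every word of length $m$. Everything else is bookkeeping with Lemma~\ref{L:repre} and the geometry of $\lar$.
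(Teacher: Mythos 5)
Your proof is correct and follows exactly the route the paper outlines; the paper in fact declines to write out the argument, calling it ``routine,'' and supplies precisely the two observations you elaborate --- part~\ref{I:large_R} from $\infty$ being an attracting fixed point for each generator, and the inclusion $\basin(S)\subseteq\inv$ from the existence of $n_R(x)$ noted just before the lemma (which in turn rests on Lemma~\ref{L:repre}). One harmless slip in your part~(b): from $w_1(z)\in\lar$ you only get $|w_1(z)|>R$, not $|w_1(z)|>M^nR$, so the chain should conclude $|w(z)|>M^{m-n}R$ rather than $M^mR$; this still exceeds $R'$ once $m$ is large, which is all that is needed.
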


\noindent{The proof of this lemma is routine. 
Therefore, we shall not provide a proof but,
instead, make a few explanatory remarks. The conclusion of \ref{I:large_R} is 
a consequence of the fact that, by definition, $\infty$ is an attracting
fixed point for each $g_i$, $i\in \{1, \dots, N\}$. The equality of
$\basin(S)$ and $\inv$ relies on the fact\,---\,observed just prior to the stament of
Lemma~\ref{L:compact}\,---\,that whenever $x\in \basin(S)$, there exists 
$n_{R}(x) \in \Z_+$ such that if $l(g)\geq n_{R}(x)$ then $g(x) \in \lar$.} 
\smallskip

Since, by definition of $\filled(S)$, $\J(g) \subset \filled(S)$ for every $g \in S$, it follows from Result~\ref{R:HM}
that $\J(S) \subset \filled(S)$. Thus $\J(S)$ is also compact in $\C$ and thus $\infty \in \F(S)$. 
In general, if $S$ is a polynomial semigroup (not necessarily finitely generated) 
then above lemma is not true\,---\,just consider $S=\langle z^2/n:n \in \Z_+\rangle$.
\smallskip

If $g$ is a polynomial of degree at least 2 then it is known that $\J(g)=\partial \filled(g)$. 
For a polynomial $g (z)=az+b$ with $|a|>1$, we can see that 
$\J(g)=\filled(g)=\partial \filled(g)$. It is now easy to see that
\[
\J(S) \subset \overline{\bigcup_{g\in S} \filled(g)}\subset \filled(S).
\]
In the above expression, inclusions can be strict but if every element in $S$ 
is of degree at least 2 then we have

\begin{result}[Boyd, \cite{boyd:ibaipsft04}]\label{R:boyd04}
Let S be a finitely generated polynomial semigroup where each element of $S$ has degree
at least 2. Then the unbounded components of $\basin(S)$ and $\F (S)$ are same.
\end{result}

\begin{remark}
In the version of the above result that Boyd establishes, see \cite[Theorem 4.1]{boyd:ibaipsft04},
he considers a class of semigroups that he calls \emph{polynomial semigroups of finite type}\,---\,see
\cite[Definition 3.1]{boyd:ibaipsft04}.
It follows from Lemma~\ref{L:compact} that a finitely generated polynomial semigroup where each 
element of $S$ has degree at least 2 is of finite type, which gives us Result~\ref{R:boyd04}.
\end{remark}

We now consider two results that have been referenced in
Section~\ref{S:intro}. First, we provide

\begin{proof}[The proof of Proposition~\ref{P:unique}]
Let $S$ be a finitely generated polynomial semigroup. Consider a finite generating 
set for $S$. Now remove one-by-one the generators that can be expressed as 
compositions of the other generators. Eventually we will end up having a minimal generating 
set. So, $S$ has at least one minimal generating set. Let
\[
S= \langle g_1,g_2,\dots,g_N \rangle = \langle h_1,h_2,\dots,h_{N'} \rangle,
\]
where each generating set is minimal. Now, if such sets are not unique then we may assume
$\{ g_1,g_2,\dots,g_N \} \neq \{ h_1,h_2,\dots,h_{N'} \}.$
Without loss of generality we may assume that  $g_1 \notin \{ h_1,h_2,\dots,h_{N'} \}$. 
But as $\{ h_1, h_2,\dots,h_{N'}\}$ is a generating set,
\[
g_1=h_{i_r}  \circ \dots \circ h_{i_1}
\]
for some $i_1, \dots, i_r \in \{1, \dots, N'\}$ and $r \geq 2$. Now, since $h_{i_j} \in 
S= \langle g_1,g_2,\dots,g_N \rangle $, for every $1 \leq j \leq r$, we get
\begin{equation}\label{E:compo}
g_1=g_{i_n} \circ \dots \circ g_{i_1}
\end{equation}
for some $i_1, \dots, i_n \in \{1, \dots, N\}$ and $n \geq 2$.
If $g_1 \neq g_{i_j}$ for every $1 \leq j \leq n$ then we would have a contradiction of the 
minimality of the generating set $\{ g_1,g_2,\dots,g_N \}$. Hence,
$g_1 = g_{i_{j^*}}$ for some $1 \leq j^* \leq n$.
\smallskip

If $\deg (g_1) =1$ then by \eqref{E:compo}, 
$\deg (g_{i_1}) = \dots =\deg (g_{i_n})=1$.
Since $|\lead{g}|>1$ for every $g\in S$ such that $\deg (g) =1$,
\[
\begin{split}
 |\lead{g_{i_n} \circ \dots \circ g_{i_1}}|
 &=|\lead{g_{i_n}} \dots \lead{ g_{i_1}}|\\
 &>|\lead{g_{i_{j^*}}}| =|\lead{g_1}|,
\end{split}
\]
which contradicts \eqref{E:compo}.
Now, if $\deg (g_1) \geq 2$ then by \eqref{E:compo},
\[
g_1=p_2 \circ g_{i_{j^*}} \circ p_1,
\]
where $p_1=g_{i_{j^*-1}} \circ \dots \circ g_{i_1}$ and 
$p_2=g_{i_n} \circ \dots \circ g_{i_{j^*+1}}$. Therefore, 
$\deg (p_1)=\deg (p_2)=1$ and $|\lead{p_1}|=|\lead{p_2}| > 1$.
We compute:
\[
\begin{split}
|\lead{g_{i_n} \circ \dots \circ g_{i_1}}| 
& = |\lead{p_2 \circ g_{i_{j^*}} \circ p_1}|\\
& =|\lead{p_2} \lead{g_{i_{j^*}}}{\lead{p_1}}^{\deg (g_{i_{j^*}})}| \\
& > |\lead{g_{i_{j^*}}}| = |\lead{g_1}|,
\end{split}
\]
which again contradicts \eqref{E:compo}. Thus $S$ has a unique minimal generating set.
\end{proof}

We now state and prove a simple lemma that is essential to the proof of
Theorem~\ref{T:main}. To do so, we need some notation. If $A$ is a 
finite set (respectively, a finite list) whose elements (respectively, terms) are
the non-constant polynomials $g_1, g_2,\dots, g_N$,
then we set
\begin{align}
\critall(A) &:=\{c \in \C:  g_i'(c)=0 \text{ for some } i\in \{1,\dots,N\}\}, \label{E:critall} \\
\crit(A) &:= \{c \in \J(S):  g_i'(c)=0 \text{ for some } i\in \{1,\dots,N\}\}. \label{E:crit}
\end{align}
With this notation, we have:

\begin{lemma}\label{L:crit_condtn}
Let $S$ be a finitely generated polynomial semigroup. The condition
\[
\sharp\critall(\gen_S)=1\,\Rightarrow\,\crit(\gen_S)\cap \exc(S) = \emptyset
\]
holds true if and only if the condition
\[
\sharp\critall(\gen)=1\,\Rightarrow\,\crit(\gen)\cap \exc(S) = \emptyset
\]
holds true for any set of generators $\gen$ of $S$.
\end{lemma}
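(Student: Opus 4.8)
The plan is to reduce the entire statement to one structural fact: the unique minimal generating set $\gen_S$ is contained in \emph{every} set of generators $\gen$ of $S$. Once this is in hand, the lemma becomes pure bookkeeping about the sets $\critall(\cdot)$ and $\crit(\cdot)$.

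First I would prove the containment $\gen_S\subseteq\gen$. Since $S$ is finitely generated, I can pick a finite subset $\gen_0\subseteq\gen$ with $S=\langle\gen_0\rangle$. Then, exactly as in the proof of Proposition~\ref{P:unique}, I delete from $\gen_0$ one at a time any element expressible as a composition of the remaining ones; because $\gen_0$ is finite this terminates at a minimal generating set of $S$ which, by construction, is contained in $\gen_0\subseteq\gen$. By the uniqueness asserted in Proposition~\ref{P:unique}, that minimal generating set is $\gen_S$, giving $\gen_S\subseteq\gen$. From this I immediately get $\critall(\gen_S)\subseteq\critall(\gen)$ and hence $\crit(\gen_S)=\critall(\gen_S)\cap\J(S)\subseteq\crit(\gen)$ for every set of generators $\gen$. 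I would also record that $\critall(\gen_S)\neq\emptyset$: any generating set of a polynomial semigroup contains an element of degree at least $2$ (as noted in Section~\ref{S:structural}), and the derivative of such a polynomial has a zero in $\C$.

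Next, the equivalence itself. The implication ``(condition for every $\gen$) $\Rightarrow$ (condition for $\gen_S$)'' is trivial, since $\gen_S$ is one of the generating sets. For the converse, assume the condition for $\gen_S$ and let $\gen$ be any set of generators with $\sharp\critall(\gen)=1$. Because $\emptyset\neq\critall(\gen_S)\subseteq\critall(\gen)$ and $\critall(\gen)$ is a single point, we must have $\critall(\gen)=\critall(\gen_S)$; in particular $\sharp\critall(\gen_S)=1$, so the hypothesis yields $\crit(\gen_S)\cap\exc(S)=\emptyset$. Finally $\crit(\gen)=\critall(\gen)\cap\J(S)=\critall(\gen_S)\cap\J(S)=\crit(\gen_S)$, and therefore $\crit(\gen)\cap\exc(S)=\emptyset$, which is the condition for $\gen$. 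Note this argument needs no case split on $\sharp\critall(\gen_S)$: assuming $\sharp\critall(\gen)=1$ already forces $\sharp\critall(\gen_S)=1$ via the containment and non-emptiness.

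I do not expect a genuine obstacle here: the only step that is not purely formal is the containment $\gen_S\subseteq\gen$, and even that is essentially already present in the proof of Proposition~\ref{P:unique} (the deletion procedure together with the uniqueness conclusion). Everything after that is manipulation of finite subsets of $\C$, using only that intersecting with $\J(S)$ commutes with the operations involved and that a non-empty subset of a singleton is that singleton.
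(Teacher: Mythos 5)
Your proof is correct and follows essentially the same route as the paper: both arguments hinge on the containment $\gen_S\subseteq\gen$ together with $\critall(\gen_S)\neq\emptyset$, from which $\sharp\critall(\gen)=1$ forces $\critall(\gen)=\critall(\gen_S)$ and hence $\crit(\gen)=\crit(\gen_S)$. You merely spell out the justification of $\gen_S\subseteq\gen$ (via the deletion procedure and uniqueness from Proposition~\ref{P:unique}), which the paper uses without restating.
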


\begin{proof}
The ``if'' part of the above assertion is obvious.
Now suppose that the condition $\big(\sharp\critall(\gen_S)=1 \Rightarrow \crit(\gen_S)\cap \exc(S) = \emptyset\big)$ 
holds true. If, for some set of generators $\gen$, we have
$\sharp\critall(\gen)=1$, then $\critall(\gen_S)=\critall(\gen)$. This is because $\gen_S\subseteq \gen$ and
$\critall(\gen_S)\neq \emptyset$.
Thus $\crit(\gen_S)=\crit(\gen)$, whence
$\big(\sharp\critall(\gen)=1\Rightarrow \crit(\gen)\cap \exc(S) = \emptyset\big)$ holds true.
\end{proof}

We conclude this section with a discussion on the type of polynomial semigroups
\textbf{excluded} by the condition in Theorem~\ref{T:main}.

\subsection{On the exceptional semigroups of Remark~\ref{Rem:broad}}\label{SS:excep}
Let $S$ be a finitely generated polynomial semigroup that does not satisfy the condition in
Theorem~\ref{T:main}. Let us write $\gen_S = \{g_1, g_2,\dots, g_N\}$. As
$\sharp\critall[S] = 1$ and $\crit[S] \cap \exc(S)\neq \emptyset$, there exists a point
$a\in \C$ such that $\critall[S] = \crit[S] = \{a\}$. From this, and the fact\,---\,evident
from the definition\,---\,that $g^{-1}(\exc(S))\subseteq \exc(S)$ for every $g\in S$, we
see that
\[
g_j(z) = B_j(z-a)^{n_j} + a \quad\text{whenever $\deg(g_j)\geq 2$},
\]
for some constant $B_j\in \C \setminus \{0\}$ and some $n_j\in \Z_+\setminus\{1\}$.
At this stage, we record the following%
\smallskip

\noindent{\textbf{Fact.} \emph{If $\mathscr{S}$ is a finitely generated rational semigroup each of whose
elements has degree at least 2 then $\exc(\mathscr{S})\subset \F(\mathscr{S})$.}}
\smallskip

\noindent{whose proof is routine. Since $\crit[S] \cap \exc(S)\neq \emptyset$, it follows
from the above fact that $\gen_S$ must contain degree-one elements. Once again, as
$g^{-1}(\exc(S))\subseteq \exc(S)$ for every $g\in S$, we get
\[
g_j(z) = B_j(z-a) + a \quad\text{whenever $\deg(g_j) = 1$},
\]
for some constant $B_j\in \C$ with $|B_j|>1$. From this discussion it follows that:
\begin{itemize}[leftmargin=14pt]
\item Every element of $S$ is of the form $B(z-a)^m +a$, where $B\in \C \setminus \{0\}$
and $m\in \Z_+$.

\item $S$ has degree-one elements, for all of which $a$ is a repelling fixed point.
\end{itemize}}

\section{Essential definitions and results in potential theory}\label{S:potential}
This section is devoted to presenting a number of essential definitions in potential theory
that we had deferred in Section~\ref{S:intro}. Additionally, we collect here several important 
results that we shall require for our proofs.

\begin{definition}\label{D:potential}
Let $\sigma$ be a Borel probability measure on $\C$ with compact support. Its 
\emph{logarithmic potential} is the function $U^{\sigma}: \C \to (-\infty,\infty]$ defined by 
\[
U^{\sigma}(z)=\int_{\C} \log \frac{1}{|z-t|} d\sigma(t)
\]
and its \emph{logarithmic energy} is given by 
\[
I(\sigma):= \int_{\C} \int_{\C} \log \frac{1}{|z-t|} d\sigma(z) d\sigma(t) 
=\int_{\C} U^\sigma(z)d\sigma(z).
\]
\end{definition}

The potential $U^{\sigma}$ is superharmonic in $\C$ and harmonic outside the support
of $\sigma$. As noted by Frostman \cite{frostman:potential35}, the potential $U^{\sigma}$ is finite at $z_0$
(i.e., does not take the value $+\infty$) if for some $\epsilon >0$ the integral 
\[
\int_0^\epsilon \frac{\sigma(D(z_0,r))}{r} dr
\]
exists and is finite. However, one can say much more.

\begin{result}[Arsove, \cite{arsove:cplmd60}]\label{R:arsove}
For the potential $U^{\sigma}$ of a Borel probability measure $\sigma$ to be 
continuous at $z_0$ it is necessary and sufficient that
\[
\lim_{\epsilon \to 0} \left \{\limsup_{z \to z_0} 
\int_0^\epsilon \frac{\sigma(D(z,r))}{r} dr \right\}=0.
\]
Moreover, if $\supp(\sigma)$ lies in a closed set $\Sigma$, then the approach of 
$z$ to $z_0$ in the above limit can be restricted to points $z$ of $\Sigma$.
\end{result}

\begin{remark}\label{rem:arsoveremark}
It follows from Result~\ref{R:arsove} that the potential $U^{\sigma}$ will be finite and 
continuous at $z_0$ if $\sigma$ satisfies a condition of the form
\[
\sigma(D(z,r)) \leq Cr^{\alpha} \quad \forall r\in (0, r_0),
\]
where $|z-z_0|<\delta$ and $C$, $\alpha$, $r_0$, $\delta$ are positive constants 
depending only on $\sigma$ and $z_0$.
\end{remark}

If $E\subset \C$ is a Borel set, then $\mathcal M (E)$ will always denote
the collection of all Borel probability measures $\sigma$ with $\supp(\sigma) \subset E$.
Let $K \subset \C$ be a compact subset of the complex plane. Define
\[
 V:= \inf \{I(\sigma):\sigma \in \mathcal M (K)\}.
\]
Then $V$ turns out to be finite or $+\infty$.
The quantity
\[
\text {cap}(K):= \exp(-V)
\]
is called the \emph{logarithmic capacity} (or simply \emph{capacity}) of $K$. 
The capacity of an arbitary Borel set $E$ is defined as 
\[
\cpt(E):=\sup \{ \cpt(K): K \subset E,\ \ K \text{ compact}\}
\]
and every set (not necessarily a Borel set) that is contained in a Borel set of zero capacity
is considered to have zero capacity. A property is said to hold 
\emph{quasi-everywhere} (which we shall often abbreviate to q.e.) on a set $E$ if the set of points
in $E$ at which this property does not hold is of logarithmic capacity zero.
\smallskip

The results and definitions that follow are from the book \cite{SaffTotik:lpwef97} by Saff and Totik.
First, we state a couple of results which describe the behaviour of the logarithmic potential 
with respect to a convergent sequence of measures in the weak* topology.

\begin{result}[Principle of Descent \& Lower Envelope Theorem]\label{R:PDLE}
Let $\sigma_n$, $n=1,2,\dots$, be a sequence of Borel probability measures all having support
in a fixed compact subset of $\C$. If $\sigma_n \to \sigma$ in the weak* topology then
\[
\begin{split}
\liminf_{n \to \infty} U^{\sigma_n}(z) &\geq U^{\sigma}(z) \qquad\text{ for every } z \in \C,\\
\liminf_{n \to \infty} U^{\sigma_n}(z) &= U^{\sigma}(z) \qquad\text{ for q.e. } z \in \C.
\end{split}
\]
\end{result}

We will now introduce some basic definitions and results from weighted potential theory.
\smallskip

Let $\Sigma \subset \C$ be a closed set and $w: \Sigma \to [0, \infty)$. 
We call such a function a \emph{weight function} on $\Sigma$.

\begin{definition}\label{D:weight}
A weight function $w$ on $\Sigma$ is said to be \emph{admissible} if it satisfies the following 
three conditions:
\begin{enumerate}[label=(\roman*), leftmargin=27pt]
\item $w$ is upper semi-continuous;
\item $\Sigma_0 := \{z \in \Sigma: w(z)>0\}$ has positive capacity;
\item if $\Sigma$ is unbounded, then $|z|w(z) \to 0$ as $|z| \to \infty, z \in \Sigma$.
\end{enumerate}
\end{definition}

Consider an admissible weight function on $\Sigma$, and define $Q\equiv Q_w$ by
\begin{equation}\label{E:w_Q_rel}
 w(z)=: \exp (-Q(z)).
\end{equation}
Then $Q: \Sigma \to (-\infty, \infty]$ is lower semi-continuous, $Q(z)< \infty$ on a set of 
positive capacity and if $\Sigma$ is unbounded, then 
\[
 \lim_{|z| \to \infty,z \in \Sigma} \{Q(z)-\log|z|\}=\infty.
\]
The function $Q$ is called an \emph{external field}.
\smallskip

Let $\Sigma\subset \C$ be a closed set.
For any $\sigma \in \mathcal M(\Sigma)$, and $w$ an admissible weight function on 
$\Sigma$, we define the \emph{weighted energy integral}
\[
\begin{split}
I_w(\sigma) & := \int_{\C} \int_{\C} \log\frac{1}{|z-t|w(z)w(t)} d\sigma(z)d\sigma(t)\\
& = \int_{\C} \int_{\C} \log \frac{1}{|z-t|} d\sigma(z)d\sigma(t) + 2 \int_{\C} Q d\sigma,
\end{split}
\]
where the last representation is valid whenever both integrals exist and are finite. 
It follows from the definition of an admissible weight that the first integral is well defined.

\begin{definition}\label{D:equi}
Let  $w$ be an admissible weight on the closed set $\Sigma$ and let
\[
 V_w:= \inf \{I_w(\sigma):\sigma \in \mathcal M (\Sigma)\}.
\]
Then a measure $\sigma$ is called an \emph{equilibrium measure associated with} $w$ 
(or, equivalently in view of \eqref{E:w_Q_rel}, an \emph{equilibrium measure associated with} $Q$) if 
\[
I_w(\sigma)= V_w.
\]
\end{definition}

\begin{remark}
In view of the relation \eqref{E:w_Q_rel}, we shall use the phrases ``admissible weight'' and
``external field'' interchangeably, both of which are standard in the literature.
We used the term ``external field'' in Section~\ref{S:intro} because it has a well-understood
meaning in electrostatics.
\end{remark}

Now we are ready to state the fundamental theorem of the theory, which gives existence and
uniqueness of equilibrium measures associated with $w$.

\begin{result}\label{R:fundamental}
Let $w$ be an admissible weight on the closed set $\Sigma$. Then $V_w$ is finite and there
exists a unique equilibrium measure $\sigma_w \in \mathcal M (\Sigma)$ associated with $w$. 
Moreover, $\sigma_w$ has finite logarithmic energy.
\end{result}

\begin{remark}\label{Rem:classical}
If $w\equiv 1$ (i.e., $Q\equiv 0$) then $I_w(\sigma)=I(\sigma)$.
In this case, if $\Sigma$ is a compact subset of $\C$ (of positive capacity) then we recover 
the classical theory of logarithmic potentials. The unique equilibrium measure associated with 
the weight $w\equiv 1$ on the compact set $\Sigma$ is called the \emph{equilibrium 
measure of $\Sigma$}. It is denoted by $\sigma_{\Sigma}$.
\end{remark}

With $w$ as above, define
\[
\begin{split}
\mathcal S_w&:= \supp(\sigma_w),\\
F_w&:= V_w - \int_{\C} Q d \sigma_w.
\end{split}
\]
The constant $F_w$ is called the \emph{modified Robin constant} for $w$.

\begin{result}\label{R:frost}
Let $w$ be an admissible weight on the closed set $\Sigma$. 
If $\sigma \in \mathcal M (\Sigma)$ has compact support 
and finite logarithmic energy, and
$U^{\sigma}(z)+Q(z)$
coincides with a constant $F$ for quasi-every $z$ in $\supp(\sigma)$, and 
$U^{\sigma}+Q \geq F$ quasi-everywhere on $\Sigma$, 
then $\sigma=\sigma_w$ and $F=F_w$.
\end{result}

Let $K$ be a compact subset of $\Sigma$ of positive capacity, and define
\begin{equation}\label{E:ms}
F(K):= \log \cpt(K)- \int_{\Sigma} Qd\sigma_{K},
\end{equation}
where $\sigma_{K}$ denotes the equilibrium measure of the compact set $K$.
This is the so-called \emph{$F$-functional of Mhaskar and Saff}, which is one of the most 
powerful tools in finding $\sigma_w$ and $\mathcal S_w$. We will use the $F$-functional to 
estimate the logarithmic capacity of the Julia set of a finitely generated polynomial semigroup.

\begin{result}\label{R:ms}
Let $w$ be an admisible weight on the closed set $\Sigma$. Then the following hold:
\begin{enumerate}[label=$(\alph*)$]
\item For every compact set $K \subset \Sigma$ of positive capacity, 
$F(K) \leq F(\mathcal S_w)$.
\item $F(\mathcal S_w)= -F_w$, where $F_w$ is the modified Robin constant for $w$.
\end{enumerate}
\end{result}

\section{Complex-analytic preliminaries}\label{S:complex}
This section gathers together a number of results in complex analysis, along with some
consequences thereof, that we shall need in our proofs in Sections~\ref{S:main}
and~\ref{S:bound}.

\subsection{Orders and degrees}
Let $f$ be a non-constant holomorphic $\sph$-valued map defined in a neighbourhood of 
$a \in \sph$. Let $(U, \phi)$ and $(V, \psi)$ be holomorphic charts at $a$ and $f(a)$
respectively such that $\tilde f := \psi \circ f \circ \phi ^{-1}$ is defined.
Suppose the Taylor expansion of $\tilde f$ at $\phi(a)$ has the form
\[
\tilde f(z)= b_0 +b_m(z-\phi(a))^m+b_{m+1}(z-\phi(a))^{m+1}+\dots,
\]
where $b_m\neq 0$. Recall that the (unique) integer $m$ does not depend on 
the choice of the charts $(U, \phi)$ or $(V, \psi)$, and
is called the \emph{order of $f$ at} 
$a$ and is denoted by $\ord_a(f)$.

\begin{result}[\textsc{Riemann--Hurwitz Formula}]\label{R:RH}
For any non-constant rational map $f$
\[
\sum_{z\in \sph}\big(\ord_z(f)-1\big) = 2 \deg(f) - 2.
\]
\end{result}

For a non-constant polynomial $g$, we have  $\ord_{\infty}(g)=\deg(g)$.
Thus, in this case, Result \ref{R:RH} becomes
\begin{equation}\label{E:RH}
\sum_{z\in \C} \big(\ord_z(g)-1\big) = \deg(g)-1.
\end{equation}
Observe that the general term in the sum is positive only when $z$ is a critical point of $g$.
\smallskip

Let the non-constant polynomials $g_1$, $g_2$, $\dots$, $g_N$ be the entries of the list
$A$ (whence they are \textbf{not} necessarily distinct). Recall the definition of $\critall(A)$: see \eqref{E:critall}.
In Section~\ref{S:main}, we will need to explicitly refer to these polynomials.
To this end, we define $\critall(g_1,g_2,\dots,g_N):= \critall(A)$.
\smallskip

The next lemma is needed in the proof of Proposition~\ref{P:card}. Recall, from the
discussion in Section~\ref{S:intro}, that Proposition~\ref{P:card} establishes for the semigroups 
of our interest a result analogous to that in \cite{BhaSri:hcrfgrs17}\,---\,but which allows elements of
$\gen$ to have critical points in $\J(S)$.
The following lemma is the key to dealing with the latter situation.

\begin{lemma}\label{L:choice}
Let $g_1$, $g_2$, $\dots$, $g_N$ be a collection of non-constant polynomials such that
$\deg(g_i) \geq 2$ for some $i\in \{1,\dots,N\}$.
Set $D:=\sum_{i=1}^N \deg(g_i)$.
If $\sharp\big(\critall(g_1,g_2,\dots,g_N)\big) > 1$ then there exists $\K \in \Z_+$ such that
\begin{equation}\label{E:choice}
\sum_{i:g_i'(x) \neq 0}{\left(\frac{D}{N}\right)}^{\frac{1}{\K}} 
+\sum_{i:g_i'(x) = 0} \ord_{x}(g_i) \leq D-\tfrac{1}{2} \quad \forall x\in \critall(g_1,g_2,\dots,g_N).
\end{equation}
\end{lemma}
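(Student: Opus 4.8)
The plan is to work one point $x \in \critall(g_1,g_2,\dots,g_N)$ at a time and find a uniform $\K$ afterwards by taking a maximum. Fix such an $x$. The left-hand side of \eqref{E:choice} is a sum over the $N$ polynomials, split according to whether $x$ is a critical point of $g_i$ or not: the non-critical indices each contribute a quantity $(D/N)^{1/\K}$ that tends to $1$ as $\K\to\infty$, while the critical indices each contribute $\ord_x(g_i)\geq 2$. So the natural strategy is to first estimate $\sum_{i:g_i'(x)=0}\ord_x(g_i)$ from above in terms of $D$ and the \emph{number} $s(x) := \sharp\{i : g_i'(x)=0\}$ of critical indices at $x$, and then choose $\K$ large enough that the $N - s(x)$ remaining terms, each close to $1$, do not push the total past $D - \tfrac12$.

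The first ingredient is the Riemann--Hurwitz bound \eqref{E:RH}: for each $i$ with $g_i'(x)=0$ we have $\ord_x(g_i) - 1 \leq \deg(g_i) - 1$, i.e. $\ord_x(g_i)\leq \deg(g_i)$, and summing over the critical indices gives
\[
\sum_{i:g_i'(x)=0}\ord_x(g_i) \;\leq\; \sum_{i:g_i'(x)=0}\deg(g_i) \;\leq\; D - \!\!\sum_{i:g_i'(x)\neq 0}\!\!\deg(g_i) \;\leq\; D - (N - s(x)),
\]
using $\deg(g_i)\geq 1$ for the non-critical terms. The key quantitative gain, however, comes from the hypothesis $\sharp\big(\critall(g_1,g_2,\dots,g_N)\big) > 1$: there is at least one other critical point $x' \neq x$ of the collection. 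Any index $j$ with $g_j'(x')=0$ satisfies $\ord_{x'}(g_j)\geq 2$, so by \eqref{E:RH} applied to $g_j$ we get $\ord_x(g_j) - 1 \leq (\deg(g_j)-1) - (\ord_{x'}(g_j)-1) \leq \deg(g_j) - 2$, hence $\ord_x(g_j)\leq \deg(g_j) - 1$ for such $j$ (and if $g_j'(x)\neq 0$ as well, trivially $\ord_x(g_j)=1\leq \deg(g_j)-1$ once $\deg(g_j)\geq 2$). Since some index $i_0$ has $\deg(g_{i_0})\geq 2$, one can arrange — possibly after noting that $\critall$ being a set of more than one point forces some $g_j$ of degree $\geq 2$ to have its critical mass spread over $\geq 2$ points, or else two distinct $g_j, g_k$ each contribute a critical point — that the bound on $\sum_{i:g_i'(x)=0}\ord_x(g_i)$ improves by at least $1$ over the naive estimate, yielding
\[
\sum_{i:g_i'(x)=0}\ord_x(g_i) \;\leq\; D - (N - s(x)) - 1.
\]

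With this in hand, \eqref{E:choice} at the point $x$ reduces to requiring
\[
(N - s(x))\,(D/N)^{1/\K} \;\leq\; (N - s(x)) + \tfrac12,
\]
which holds as soon as $(D/N)^{1/\K} \leq 1 + \tfrac{1}{2N}$, i.e. for all $\K$ with $\K \geq \log(D/N)\big/\log\!\big(1+\tfrac1{2N}\big)$ (note $D/N \geq 1$ so the logarithm is nonnegative; if $D = N$ every $g_i$ is linear, contradicting $\deg(g_{i_0})\geq 2$, so in fact $D/N > 1$ and the bound is a genuine finite integer). This choice of $\K$ depends only on $D$ and $N$, not on $x$, so the same $\K$ works simultaneously for every $x \in \critall(g_1,g_2,\dots,g_N)$, finishing the proof.

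The step I expect to be the main obstacle is the "improve by $1$" claim: one must verify carefully, using \eqref{E:RH} and the combinatorics of how critical multiplicity is distributed among the $g_i$'s and among the (at least two) points of $\critall(g_1,g_2,\dots,g_N)$, that the presence of a second critical point genuinely costs one unit in the bound on $\sum_{i:g_i'(x)=0}\ord_x(g_i)$ — handling the edge cases where the second critical point belongs to the same $g_j$ as a critical point at $x$ versus a different generator. Everything after that is the elementary estimate on $(D/N)^{1/\K}$.
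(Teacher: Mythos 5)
Your proof follows essentially the same route as the paper's: use the Riemann--Hurwitz count \eqref{E:RH} together with the hypothesis $\sharp\critall(g_1,\dots,g_N)>1$ to produce, for each $x\in\critall(g_1,\dots,g_N)$, an index $j$ with $\ord_x(g_j)\leq\deg(g_j)-1$, and then observe that the left-hand side of \eqref{E:choice} can be pushed within $\tfrac12$ of $\sum_{i=1}^N\ord_x(g_i)\leq D-1$ by taking $\K$ large. The paper finishes by letting $k\to\infty$; you instead extract the explicit threshold $\K\geq\log(D/N)\big/\log\bigl(1+\tfrac{1}{2N}\bigr)$, a mild quantitative refinement of the same idea.

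The step you flag as unresolved --- the ``improve by $1$'' claim --- is in fact already closed by the ingredients in your own write-up, so there is no genuine gap; only the murkier speculation about ``critical mass spread over $\geq 2$ points'' should be deleted. Fix $x$, take $x'\in\critall(g_1,\dots,g_N)\setminus\{x\}$, and choose any $j$ with $g_j'(x')=0$. You have established $\ord_x(g_j)\leq\deg(g_j)-1$ irrespective of whether $g_j'(x)=0$ (by \eqref{E:RH}, since $\ord_{x'}(g_j)\geq 2$) or $g_j'(x)\neq 0$ (since then $\ord_x(g_j)=1$ and $\deg(g_j)\geq 2$, as $g_j$ has a critical point). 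Now just track where $j$ lives: if $g_j'(x)=0$, the unit of savings occurs inside the sum $\sum_{i:g_i'(x)=0}\ord_x(g_i)$; if $g_j'(x)\neq 0$, then $\deg(g_j)\geq 2$ forces $\sum_{i:g_i'(x)\neq 0}\deg(g_i)\geq N-s(x)+1$. Either way $\sum_{i:g_i'(x)=0}\ord_x(g_i)\leq D-(N-s(x))-1$, which is exactly the inequality you need (and is equivalent to $\sum_{i=1}^N\ord_x(g_i)\leq D-1$, the form the paper uses).
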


\begin{proof}
For simplicity, we shall denote the set $\critall(g_1,g_2,\dots,g_N)$ as $\critall$.
Let $x \in \critall$. Suppose first that there exists $i^*\in \{1,\dots,N\}$ such that 
$\deg(g_{i^*}) \geq 2$ and $g_{i^*}'(x) \neq 0$. Then, since $\ord_{x}(g_{i^*})=1$, 
$\ord_{x}(g_{i^*}) \leq \deg(g_{i^*})-1$. On the other hand, if $g_i'(x)=0$ for every $g_i$ 
satisfying $\deg(g_i) \geq 2$ then, since $\sharp\critall > 1$, there exists 
$ x'\ (\neq x) \in \critall$ such that $g_{i'}'(x') = 0$ for some $i'\in \{1,\dots,N\}$. 
Thus $\ord_{x'}(g_{i'}) \geq 2$. Now, by \eqref{E:RH}, $\ord_{x}(g_{i'}) \leq \deg(g_{i'})-1$.
To summarize: for each $x \in \critall$ there exists 
$i\in \{1,\dots,N\}$ such that $\ord_{x}(g_i) \leq \deg(g_i)-1$. 
Of course, in general, we have 
$\ord_{x}(g_i) \leq \deg(g_i)$ for all $i\in \{1,\dots,N\}$. So summing over all 
$i\in \{1,\dots,N\}$ gives
\[
\sum_{i=1}^N\ord_x(g_i) \leq \left(\sum_{i=1}^N \deg(g_i) \right) -1=D-1.
\]
It is easy to see that for each $x \in \critall$,
\[
\sum_{i:g_i'(x) \neq 0}{\left(\frac{D}{N}\right)}^{\frac{1}{k}} 
+ \sum_{i:g_i'(x) = 0} \ord_{x}(g_i) \longrightarrow 
\sum_{i=1}^N\ord_x(g_i) \ \ \text{ as } \  k \rightarrow \infty.
\]
Thus, in view of the last inequality, we can choose $\K \in \Z_+$ such that
for all $x \in \critall$,
\[
\sum_{i:g_i'(x) \neq 0}{\left(\frac{D}{N}\right)}^{\frac{1}{\K}} 
+ \sum_{i:g_i'(x) = 0} \ord_{x}(g_i) \leq D-\tfrac{1}{2}.
\]
Thus we have the proof.
\end{proof}

\subsection{The Dinh--Sibony measure}\label{SS:DinSib}
We now provide a brief discussion of the formalism and the results underlying the
convergence statement \eqref{E:asymp1}. The first basic observation is that with $X$
as in Section~\ref{S:intro}, any two holomorphic correspondences on $X$ can be composed
with each other. Since compositions of correspondences of the most general kind are
not relevant to the proofs in this paper, we shall just make the following observations
on the subject of composing two correspondences. (For readers who are more comfortable
with complex analysis in one dimension, we refer to \cite{BhaSri:hcrfgrs17} for a more
detailed discussion.) They are:
\renewcommand{\theenumi}{{\emph{\roman{enumi}}}}
\begin{enumerate}[leftmargin=27pt]
\item The \emph{topological degree} of a holomorphic correspondence $\Gamma$ is the
generic number of preimages of a point counted according to multiplicity. To elaborate:
representing $\Gamma$ as in
\eqref{E:stdForm}, it is classical that there is a Zariski-open
set $W\subset X_2$ and $\nu_i\in \Z_+$ such that $(\pi_2^{-1}(W)\cap \Gamma_i, W, \pi_2)$
is a $\nu_i$-sheeted covering.
The topological degree of $\Gamma$ is defined as
\[
{\rm deg}_{top}(\Gamma) := \sum\nolimits_{1\leq i\leq N} m_i\nu_i.
\]
In the $1$-dimensional case, we abbreviate ${\rm deg}_{top}(\Gamma)$ to $d_1(\Gamma)$,
as introduced in Section~\ref{S:intro}. In what follows, $\Gamma^{\dagger}$ will denote the
\emph{adjoint} of $\Gamma$. In the notation of  \eqref{E:stdForm},
\[
\Gamma^{\dagger} := \sum\nolimits_{1\leq i\leq N}\!m_i\Gamma^{\dagger}_i
\]
where $\Gamma^{\dagger}_i := \{(y,x)\in X_2\times X_1: (x,y)\in \Gamma_i\}$.
In the $1$-dimensional case, $d_0(\Gamma) := d_1(\Gamma^{\dagger})$.

\item A holomorphic correspondence $\Gamma$ determines a relation from
$X_1$ to $X_2$ given, in the notation of \eqref{E:stdForm}, by $\cup_{1\leq i\leq N}\Gamma_j$.
Thus, given two holomorphic correspondences on $X$, their composition is, in essence, the
composition of the underlying relations with a little care taken to account for the multiplicities
(the integers $m_1, m_2,\dots, m_N$ in the notation of \eqref{E:stdForm}). The observation
concerning this ``accounting'' that is relevant to us is that if $X$ is a compact Riemann surface
and  $\Gamma^1$ and $\Gamma^2$ are holomorphic correspondences on $X$ then
$d_j(\Gamma^2\circ \Gamma^1) = d_j(\Gamma^2)d_j(\Gamma^1)$, $j = 0, 1$.
We shall denote the $n$-fold iterated composition of $\Gamma$ by $\Gamma^{\circ{n}}$.
\end{enumerate}
Also of immediate relevance is the following formula: given the following collections of
non-constant rational maps $g_1, g_2,\dots, g_N$ and $f_1, f_2,\dots, f_M$, not necessarily
distinct, and
\[
\Gamma^1 := \sum_{1\leq i\leq N}{\rm graph}(g_i) \quad\text{and}
\quad \Gamma^2 := \sum_{1\leq j\leq M}{\rm graph}(f_j)
\]
it turns out that
\[
\Gamma^2\circ \Gamma^1 = \sum_{1\leq i\leq N}\,\sum_{1\leq j\leq M}{\rm graph}(f_j\circ g_i)
\] 
Deferring for the moment the discussion of pullbacks of currents by holomorphic
correspondences, we fix the following notation: with $X$ as in Section~\ref{S:intro},
$\Gamma$ a holomorphic correspondence on $X$ and $T$ a current that can be
pulled back by $\Gamma$, we will denote the pullback of $T$ by $F_{\Gamma}^{*}T$.
With this, we can state the two results from which \eqref{E:asymp1} follows. These are
results by Dinh--Sibony. The specific results cited actually address much more general 
(including multi-dimensional) situations than ours. In the form in which they appear, they
are heavily paraphrased in two ways:
\begin{itemize}[leftmargin=14pt]
\item they are stated merely for holomorphic correspondences on $\sph$; and
\item the convergence stated below actually holds on a larger class of test functions
(which were introduced in \cite{DinhSibony:dvtma06}; also see \cite{DinhSibony:emtddc05}),
but weak${}^{\boldsymbol{*}}$ convergence suffices for our purposes.
\end{itemize} 
With these words, the results needed are: 

\begin{result}[Th{\'e}or{\`e}me~5.1 of \cite{DinhSibony:dvtma06} paraphrased for
$\sph$]\label{R:1st_DS_Res}
Let $\Gamma_n$, $n\in \Z_+$, be holomorphic correspondences on $\sph$. Suppose that the series
\[
\sum\nolimits_{n\in \Z_+}(d_0(\Gamma_1)/d_1(\Gamma_1))\dots(d_0(\Gamma_n)/d_1(\Gamma_n))
\]
converges. Then, there exists a regular Borel probability measure $\mu$ such that
\[
 d_1(\Gamma_1)^{-1}\!\!\dots d_1(\Gamma_n)^{-1}
 F^*_{\raisebox{-1pt}{\!$\scriptstyle \Gamma_n\circ\dots\circ\Gamma_1$}}(\FS)
 \weakST \mu \;\; \text{as measures, as $n\to \infty$.}
\]
The measure $\mu$ places no mass on polar sets.
\end{result}

\begin{result}[Th{\'e}or{\`e}me~1.1 of \cite{DinhSibony:dvtma06} paraphrased for
$\sph$]\label{R:2nd_DS_Res}
Let $\Gamma_n$, $n\in \Z_+$, be holomorphic correspondences on $\sph$. Suppose
$\sum_{n\in \Z_+}d_0(\Gamma_n)/d_1(\Gamma_n)$ converges. Then, there exists a 
Borel polar set $E\varsubsetneq \sph$
such that for any $a\in \sph\setminus E$,
\[
 d_1(\Gamma_n)^{-1}\!\big(F^*_{\raisebox{-1pt}{\!$\scriptstyle \Gamma_n$}}(\FS) -
 F^*_{\raisebox{-1pt}{\!$\scriptstyle \Gamma_n$}}(\delta_a)\big)
 \weakST 0 \;\; \text{as $n\to \infty$.}
\]
\end{result}

In both these results, $\FS$ stands for the Fubini--Study form on $\sph$. As this is a volume
form on $\sph$, it and its pullbacks are treated as measures in Result~\ref{R:1st_DS_Res}.
Both results involve the  notion of the pullback of a measure by a correspondence.
A measure\,---\,as discussed in \cite[Sections~2--3]{DinhSibony:dvtma06}\,---\,is
an example of a current that can be pulled back by a holomorphic correspondence,
which is the general framework for the results in \cite{DinhSibony:dvtma06}. Since there
is a fairly detailed discussion of the definition and computation of pullbacks of measures
in the one-dimensional setting in \cite[Section~4.1]{BhaSri:hcrfgrs17}, we refer the
reader to it.
\smallskip

To conclude this section, we present the following pullback formula (the
details of whose computation are presented in the last reference). Let
$g_1, g_2,\dots, g_N$ be a collection of non-constant polynomials, not necessarily
distinct. Call this collection $\mathscr{C}$ and write
\[
\corrC := \sum\nolimits_{1\leq i\leq N}{\rm graph}(g_i).
\]
Let $a\in \sph$. For simplicity of notation, we shall abbreviate here, and in the 
sections that follow, the pullback
$F_{\Gamma_{\!\!\!\genfrac{}{}{0pt}{2}{}{\mathscr{C}}}}^{*}\!\!\delta_a$ as
$F_{\mathscr{C}}^{*}\delta_a$. Then,
\begin{equation}\label{E:dirac_example}
F_{\mathscr{C}}^{*}\delta_a = \sum_{1\leq i\leq N}\,\sum_{x \in {g_i^{-1}\{a\}}^{\bullet}}\delta_x.
\end{equation}
Here, the notation $x \in {g_i^{-1}\{a\}}^{\bullet}$ signifies that $x$ is repeated according to multiplicity
as it varies through $g_i^{-1}\{a\}$. Also, let us abbreviate
$F_{\Gamma_{\!\!\!\genfrac{}{}{0pt}{2}{}{\mathscr{C}}}^{\circ n}}^{*}\delta_a$ as
$(F_{\mathscr{C}}^{*})^n\delta_{a}$. Then, from Results~\ref{R:1st_DS_Res} and~\ref{R:2nd_DS_Res},
we conclude that there exist a Borel polar set $E\varsubsetneq \sph$ and a measure $\mu_{\mathscr{C}}$
having the properties stated in Result~\ref{R:1st_DS_Res} such that
\begin{equation}\label{E:asymp2}
d_1(\corrC)^{-n}(F_{\mathscr{C}}^{*})^n\delta_{a}\weakST \mu_{\mathscr{C}}
 \;\; \text{as $n\to \infty$,} \;\; \forall a\in \sph\setminus E.
\end{equation}

\section{Theorem~\ref{T:main} and associated results}\label{S:main}
This section is devoted to proving several results\,---\,including the theorem alluded to in
Section~\ref{S:intro} in the discussion following the statement of Theorem~\ref{T:main}\,---\,that
are closely tied to the proof of the latter theorem. To do so, we need to fix certain notations.
Let $S$ be a finitely generated polynomial semigroup. Let
$g_1$, $g_2$, $\dots$, $g_N$ (\textbf{not} necessarily distinct) be polynomials
such that $S = \langle g_1, g_2,\dots, g_N\rangle$, and
define
\[
M:=\max \{ |g_i'(z)|: z\in \J(S), \ i \in \{1, \dots, N\}\},
\]
\[
 R:=\frac{D}{N} \ \ \ \text{and} \ \ \ \lambda:= \frac {\log R}{\log M},
\]
where $D:=\sum_{i=1}^N \deg(g_i)$.
Thus $M=R^{\frac {1}{\lambda}}$. Note that $R>1$ and by Result~\ref{R:HM}, 
repelling fixed points of all elements of $S$ are dense in $\J(S)$. Thus $M>1$. Also define
\[
\crit(g_1,g_2,\dots,g_N) := \{c \in \J(S):  g_i'(c)=0 \text{ for some } i\in \{1,\dots,N\}\}.
\]

In what follows, a collection denoted by $A^{\bullet}$ will represent a list; the objects in $A^{\bullet}$ 
will be repeated according to multiplicity.
Also $A$ will denote the set underlying $A^{\bullet}$. The notation $\sharp A^{\bullet}$ 
will denote the number of objects in $A^{\bullet}$, counted according to multiplicity. All other notations
will be as introduced in Sections~\ref{S:intro} and~\ref{S:complex}.
\smallskip

We are now ready to state the following

\begin{proposition}\label{P:card}
Let $S$ be a finitely generated polynomial semigroup and let
$g_1$, $g_2$, $\dots$, $g_N$ (not necessarily distinct) be polynomials
such that $S = \langle g_1, g_2,\dots, g_N\rangle$.
Assume
\[
\sharp\big(\critall(g_1, g_2,\dots, g_N)\big) > 1.
\]
Consider the correspondence
\[
\Gamma :=\sum\nolimits_{1\leq i\leq N} {\rm graph}(g_i),
\]
and abbreviate ${(F^{n})}^{\dagger} := F_{(\Gamma^{\dagger})^{\circ n}}$. Then
there exist $r_0>0$ and $\K \in \Z_+$ such that for any $ r \in (0,r_0]$ and 
$\arb \in \J(S)$, we have 
\begin{equation}\label{E:card}
\sharp({(F^{n})}^{\dagger}(\arb) \cap D(z,r))^{\bullet} 
\leq \max \big( D^{n-\frac{\sub}{\K}+1}N^{\frac{\sub}{\K}-1}, {(D-\tfrac{1}{2})}^{n}\big)
\end{equation}
for all $n \in \mathbb N$ and $z \in \C$, where $\sub \in \Z_+$ is the unique integer such that 
\[
 r \in I(\sub):=\left (r_0 R^{\frac{-2\sub}{\lambda}}, r_0 R^{\frac{-2(\sub-1)}{\lambda}} \right].
\]
\end{proposition}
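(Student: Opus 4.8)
The plan is to bound the counting function recursively, by ``peeling off'' the composition-factors of a word one at a time starting from the innermost one, and to feed the local behaviour at critical points into that recursion through Lemma~\ref{L:choice}.

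\emph{Setting up the peeling recursion.} Since ${(F^{n})}^{\dagger}(\arb)=\bigcup_{l(W)=n}W^{-1}(\arb)$ and, by invariance of $\J(S)$, every such preimage lies in $\J(S)$, we may assume $z\in\J(S)$ after replacing $r$ by $2r$; this only shifts constants and indices by a bounded amount, which is exactly the slack built into the definition of $I(\sub)$. Moreover all intermediate forward images of preimages stay on $\J(S)$, so derivative bounds near $\J(S)$ remain applicable throughout. Fix $M'$ to be a bound for $|g_i'|$, $i=1,\dots,N$, on a small closed neighbourhood of $\J(S)$, chosen with $M'$ as close to $M$ as we please; then $g_j(D(z,r))\subseteq D(g_j(z),M'r)$ for $z\in\J(S)$ and $r$ small. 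Writing a word $W$ of length $n$ as $W=W'\circ g_j$ with $l(W')=n-1$, grouping preimages $w\in W^{-1}(\arb)\cap D(z,r)$ by the value $w':=g_j(w)$, and using $\ord_w(W)=\ord_w(g_j)\cdot\ord_{w'}(W')$, one obtains
\[
\sharp\big({(F^{n})}^{\dagger}(\arb)\cap D(z,r)\big)^{\bullet}\ \le\ \sum_{j=1}^N c_j(z,r)\,\sharp\big({(F^{n-1})}^{\dagger}(\arb)\cap D(g_j(z),M'r)\big)^{\bullet},
\]
where $c_j(z,r):=\sup_{w}\sharp\big(g_j^{-1}(w)\cap D(z,r)\big)^{\bullet}$, which is always $\le\deg g_j$.

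\emph{Local estimates via Lemma~\ref{L:choice}.} Fix $r_0>0$ small enough to satisfy finitely many conditions governed by the separation of the points of $\critall(g_1,\dots,g_N)$, by the linearizing neighbourhoods of each $g_j$ at its critical points, and by neighbourhoods on which each $g_j$ is injective about points where $g_j'\ne 0$. Then, for $z\in\J(S)$ and $r\le r_0$ with $D(z,r)\cap\J(S)\ne\emptyset$, exactly one of two situations occurs. Either $z$ lies within a controlled distance of a (then necessarily unique) point $x\in\critall(g_1,\dots,g_N)$, in which case $c_j(z,r)\le\ord_x(g_j)$ whenever $g_j'(x)=0$, and $c_j(z,r)\le 1\le R^{1/\K}$ whenever $g_j'(x)\ne 0$ (as $g_j$ is then injective on $D(z,r)$), so that Lemma~\ref{L:choice} gives
\[
\sum_{j=1}^N c_j(z,r)\ \le\ \sum_{j:\,g_j'(x)=0}\ord_x(g_j)+\sum_{j:\,g_j'(x)\ne 0}R^{1/\K}\ \le\ D-\tfrac12\,;
\]
or $z$ is bounded away from $\critall(g_1,\dots,g_N)$, in which case every $g_j$ is injective on $D(z,r)$ and $\sum_j c_j(z,r)\le N\le D-\tfrac12$. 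Hence $\sup_z\sum_j c_j(z,r)\le D-\tfrac12$ for all $r\le r_0$, while trivially $\sup_z\sum_j c_j(z,r)\le D$ for all $r$. (Here $\K$ is first the integer furnished by Lemma~\ref{L:choice}; it will be enlarged below, which preserves that lemma's conclusion.)

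\emph{Iteration and bookkeeping.} Put $\widetilde A(n,r):=\sup_{z\in\J(S),\,\arb\in\J(S)}\sharp\big({(F^{n})}^{\dagger}(\arb)\cap D(z,r)\big)^{\bullet}$, so $\widetilde A(0,\cdot)\le 1$. The previous two steps give $\widetilde A(n,r)\le(D-\tfrac12)\,\widetilde A(n-1,M'r)$ whenever $r\le r_0$, and $\widetilde A(n,r)\le D\,\widetilde A(n-1,M'r)$ always. Iterating, if $k=k(r)\approx\log(r_0/2r)/\log M'$ denotes the number of initial steps during which the radius stays $\le r_0$, then $\widetilde A(n,r)\le(D-\tfrac12)^{\min(k,n)}D^{(n-k)^{+}}$. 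For $r\in I(\sub)$ one has $r\le r_0 M^{-2(\sub-1)}$, hence $k(r)\ge\big((2\sub-2)\log M-\log 2\big)/\log M'\ge\sub$ once $\sub$ is large (and $M'$ close enough to $M$); when $\sub\le\K$ the claimed bound is $\ge D^n$, hence trivial, so only $\sub>\K$ matters, and we are free to take $\K$, hence the relevant $\sub$, as large as needed. It then remains to check the elementary inequality $(D-\tfrac12)^{k}D^{n-k}\le (D-\tfrac12)^{\sub}D^{n-\sub}\le D^{\,n-\sub/\K+1}N^{\,\sub/\K-1}$ for $n>k$ (and $(D-\tfrac12)^n$ suffices for $n\le k$); the second estimate reduces to $\sub/\K\le 1+\sub\,\log\!\big(D/(D-\tfrac12)\big)/\log(D/N)$, which holds for every $\sub\ge 1$ as soon as $\K\ge\log(D/N)/\log\!\big(D/(D-\tfrac12)\big)$. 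Enlarging $\K$ to exceed both this quantity and the threshold from Lemma~\ref{L:choice} (and shrinking $r_0$, pushing $M'$ toward $M$, accordingly) completes the proof.

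I expect the main obstacle to be the local analysis in the second step: making the dichotomy ``near / far from $\critall(g_1,\dots,g_N)$'' quantitatively precise and uniform in $z$ and in small $r$, and in particular deriving the per-generator bounds $c_j(z,r)\le\ord_x(g_j)$ and $c_j(z,r)\le 1$ from the genuine local geometry of $g_j$ on $D(z,r)$ — this is exactly the careful handling of local orders at critical points that goes beyond the critical-point-free argument of \cite{BhaSri:hcrfgrs17}. A secondary nuisance is the bookkeeping converting $r\in I(\sub)$ into a lower bound for the number of contracting recursion steps; the factor $2$ in the exponent in the definition of $I(\sub)$ is precisely the margin needed to absorb the gap between $M$ and $M'$ together with the integer rounding.
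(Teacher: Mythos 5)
Your proposal is essentially sound and reaches the same inequality, but the bookkeeping route is genuinely different from the paper's and it is worth spelling out the comparison, along with one slip in how the recursion is set up.

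The peeling step (decomposing a word $W$ of length $n$ as $W'\circ g_j$ with $l(W')=n-1$ and using $\ord_w(W)=\ord_w(g_j)\ord_{g_j(w)}(W')$) and the near/far dichotomy at $\critall$, fed into Lemma~\ref{L:choice}, are the same core ideas as in the paper's proof. What differs is how the resulting per-step coefficients are converted into the claimed bound. The paper runs a direct induction in which the inductive hypothesis is the \emph{exact} quantity $\max(D^{n-\nu/\K+1}N^{\nu/\K-1},(D-\tfrac12)^n)$ and, crucially, in the near-critical case it tracks \emph{two different} radius growth factors for the two types of generators: $R^{2/\lambda}$ for the non-critical $g_i$ (so the inner count is evaluated at index $\nu-1$), and $1$ for the critical $g_i$ (so the inner count is evaluated at the same $\nu$). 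The algebra in which $D^{m-(\nu-1)/\K+1}N^{(\nu-1)/\K-1}=(D/N)^{1/\K}D^{m-\nu/\K+1}N^{\nu/\K-1}$ then matches exactly the coefficient $(D/N)^{1/\K}$ appearing in Lemma~\ref{L:choice}, which is why the paper's $\K$ is only constrained by that lemma. Your recursion, by contrast, upper-bounds every radius factor by a single $M'$ (close to $M$) regardless of whether the generator is contracting near a critical point, and bounds the coefficient by $D-\tfrac12$ or $D$ according to whether the radius is still $\le r_0$. This yields the cruder $(D-\tfrac12)^{\min(k,n)}D^{(n-k)^+}$ bound, which you then convert to the target form by the elementary exponential inequality at the end; the price is a further enlargement of $\K$ beyond what Lemma~\ref{L:choice} requires (to absorb both the threshold $\nu_0$ below which you fall back on the trivial $D^n$ estimate, and the constraint $\K\ge\log(D/N)/\log(D/(D-\tfrac12))$). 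This is a valid trade: a simpler iteration and weaker local tracking, cashed out for a larger but still admissible $\K$.

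One point to correct: the reduction ``assume $z\in\J(S)$ after replacing $r$ by $2r$'' only helps at the top level. In the recursion, the disc centre moves from $z$ to $g_j(z)$, and since $\J(S)$ is only backward-invariant under $S$, $g_j(z)$ need not lie in $\J(S)$. If one insisted on re-centering to a point of $\J(S)$ at every step, the radius would grow by $(2M')^k$ rather than $M'^k$, which would upset the count $k(r)$. The clean fix is to define $\widetilde A(n,r)$ as a supremum over \emph{all} $z\in\C$ (as the paper does): for $z$ with $D(z,r)\cap\J(S)=\emptyset$ the count is zero, and otherwise $z\in\J^r$ so the derivative bounds and the near/far dichotomy still apply. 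With that change the recursion closes and no re-centering is needed. You should also be explicit that the nearby critical point $x$ produced in your Case analysis is necessarily in $\crit$, not merely $\critall$, once $r_0$ is smaller than the distance from $\J(S)$ to $\critall\setminus\crit$; Lemma~\ref{L:choice}, being stated for all $x\in\critall$, of course still applies, but the geometric conditions you impose on $r_0$ are only available for the critical points actually accessible from $\J^{r_0}$.
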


\begin{proof}
For $\eps > 0$, let us write
\[
 \J^{\eps}:= \bigcup\nolimits_{\xi \in \J(S)}D(\xi, \eps) \ \ \ \text{and} \ \ \ 
\bar \J^{\eps}:= \overline{\J^{\eps}}.
\]
In this proof we will abbreviate $\crit(g_1,g_2,\dots,g_N)$ to $\crit$. If
$\crit\neq \emptyset$ then denote this (finite) set of points by $\{c_1, c_2, \dots, c_q\}$.
Note that \eqref{E:choice} holds for all $c \in \crit$. Consider the following
quantities:
\begin{itemize}[leftmargin=14pt]
\item If $\crit\neq \emptyset$ then let $\delta_1 > 0$ be so small that:
\renewcommand{\labelenumi}{{\arabic{enumi})}}
  \begin{enumerate}
  \item $D(c_j,2\delta_1)$ are pairwise disjoint for $j=1,2,\dots,q$,
  \item if for some $i\in \{1,\dots,N\}$ and $j\in \{1,\dots,q\}$, $g_i'(c_j) =0$ then $|g_i'(z)| \leq
  1$ for every 
  $z \in D(c_j,2\delta_1) $ and $g_i$ maps at most $\ord_{c_j}(g_i)$ points of 
  $D(c_j,2\delta_1)$ to a single point of $\C$,
  \item if for some $i\in \{1,\dots,N\}$ and $j\in \{1,\dots,q\}$, $g_i'(c_j) \neq 0$ then
  $|g_i'(z)| \neq 0$ for 
  every $z \in D(c_j,2\delta_1)$.
  \end{enumerate}
  If $\crit = \emptyset$, then we just set $\delta_1:=1$.
  \smallskip
  
\item Let $\delta_2 > 0$ be such that $g_i'(z) \neq 0$ for every 
  $z \in \J^{2 \delta_2} \setminus \J(S)$ and $i=1, \dots, N$.
  \smallskip
  
\item Let $\delta_3 > 0$ be such that $|g_i'(z)| < R^{\frac {2}{\lambda}}$ for every 
  $z \in \J^{\delta_3}$ and $i=1, \dots, N$.
\end{itemize}
Next, we introduce the following open covers:
\renewcommand{\theenumi}{{\emph{\roman{enumi}}}}
  \begin{enumerate}[leftmargin=27pt]
 \item\label{I:big_cover} Define
 \[
 \mathcal O_{0}:= \begin{cases}
 					\{ D(\xi, r(\xi)): \xi \in \bar \J^{\delta_2}
 					\setminus \cup_{j=1}^q D(c_j, \delta_1) \}, &\text{if $\crit\neq \emptyset$}, \\
 					\{ D(\xi, r(\xi)): \xi \in \bar \J^{\delta_2} \}, &\text{if $\crit= \emptyset$},
				\end{cases}
  \]
  where $r(\xi)>0$ is such that $g_i|_{ D(\xi, r(\xi))}$ is injective for $i=1,2,\dots,N$.
  \smallskip
  
  \item\label{I:ball_cover} If $\crit\neq \emptyset$ then for each $j=1,2,\dots,q$, 
   $\mathcal O_j := \{ D(\xi, r(\xi)): \xi \in \overline { D(c_j, \delta_1)} \}$,
   where $r(\xi)>0$ is such that if $g_i'(c_j) \neq 0$ then $g_i|_{ D(\xi, r(\xi))}$ is injective.
  \end{enumerate}
Finally, we set $\delta_4 > 0$ to be the minimum of the Lebesgue numbers of 
all the covers introduced above, and write:
\[
r_0:= \frac {\min \{\delta_1, \delta_2, \delta_3,\delta_4 \}}{4}.
\]

If $\crit = \emptyset$ then set $\K = 1$, else
fix $\K \in \Z_+$ such that \eqref{E:choice} holds true for $g_1, g_2,\dots, g_N$ as
above. With this choice of $r_0$ and $\K$, we will prove our result by induction on $n$.
To this end, we must mention that, by definition, ${(F^{0})}^{\dagger}(\arb) := \{\arb\}$
for any $\arb\in \sph$.
\smallskip

Fix an arbitary $\arb \in \J(S)$.
If $n=0$ then the second term on the right-hand side of \eqref{E:card} is 1 and the left-hand 
side of \eqref{E:card} is by definition $\leq$ 1 for any $z\in \C$ and $r\in (0, r_0]$.
Hence, the inequality \eqref{E:card} follows for these $r$ and $z$.
Now assume that \eqref{E:card}
holds for $n=m$, where $m \geq 0$, for every $z\in \C$ and $r\in (0, r_0]$.
We will study what this implies for $n=m+1$. Consider an arbitrary
$z\in \C$ and $r\in (0, r_0]$. 
If $(z,r)$ is such that $D(z,r) \cap \J(S) = \emptyset$ then as
${(F^{n})}^{\dagger}(\arb) \subset \J(S)$,
the left-hand side of \eqref{E:card} is zero and the latter inequality trivially follows
for all $n\in \N$. Hence, 
assume that $D(z,r) \cap \J(S) \neq \emptyset.$
\smallskip

If, for the $r$ chosen, $\sub=1$ then the first term on the right-hand side
of \eqref{E:card} is 
\[
 D^{m+1}{\Big(\frac{D}{N}\Big)}^{1-\frac{1}{\K}} \geq  D^{m+1},
\] 
whence, by definition, the left-hand side of \eqref{E:card} $\leq D^{m+1}$. Thus 
\eqref{E:card} (for $\nu=1$) follows. We therefore consider $\sub > 1$.
\smallskip

\noindent \textbf{Case 1.} \emph{Either $\crit=\emptyset$ OR
$\big(\crit\neq \emptyset$ and $z \not \in \cup_{j=1}^q D(c_j,\delta_1)\big)$.}
\vspace{1mm}

\noindent{Observe that $D(z,r) \cap \J(S) \neq \emptyset$ implies that $z \in \J^r$.
Let us write
\[
\bad := \begin{cases}
		\cup_{j=1}^q D(c_j,\delta_1), &\text{if $\crit\neq \emptyset$}, \\
		\emptyset, &\text{if $\crit = \emptyset$}.
	     \end{cases}
\]
Since $r \leq \delta_2/4$, we get that $z \in \bar \J^{\delta_2} \setminus \bad$.
Also, since $r \leq \delta_4/4$, there exists a disc 
$ D(\xi, r(\xi))\in \mathcal O_{0}$ such that $D(\xi, r(\xi))\supset D(z,r)$.
Thus, by the choice of $r(\xi)$ in defining the cover $\mathcal O_{0}$, $g_i|_{ D(z, r)}$ 
is an injective map for $i=1,\dots,N$, and we get
\[
\sharp({(F^{m+1})}^{\dagger}(\arb) \cap D(z,r))^{\bullet} 
= \sum_{i=1}^N  \sharp({(F^{m})}^{\dagger}(\arb) \cap g_i(D(z,r)))^{\bullet}.
\]
Now, $r \leq \delta_3/4$ and $z \in \J^r$ implies that $D(z,r) \subset \J^{\delta_3}$. Thus
$|g_i'(\zt)| < R^{{2}/{\lambda}}$ for all 
$\zt \in D(z,r)$ and $i=1,\dots, N$. Therefore, by an application of
the Mean Value Inequality, we get 
$g_i(D(z,r)) \subset D(g_i(z), rR^{{2}/{\lambda}})$.
Thus
\[
\sum_{i=1}^N  \sharp({(F^{m})}^{\dagger}(\arb) \cap g_i(D(z,r)))^{\bullet}
\leq \sum_{i=1}^N \sharp({(F^{m})}^{\dagger}(\arb) 
\cap D(g_i(z),rR^{\frac{2}{\lambda}}))^{\bullet}.
\]
Observe that $rR^{{2}/{\lambda}} \in I(\sub-1)$; 
thus, by the induction hypothesis and the above observations,
\begin{align}
 \sharp({(F^{m+1})}^{\dagger}(\arb) \cap D(z,r))^{\bullet} 
 & \leq N \max \big( D^{m-\frac{\sub-1}{\K}+1}N^{\frac{\sub-1}{\K}-1}, 
     {(D-\tfrac{1}{2})}^{m}\big) \notag \\
 & = \max \Big( D^{m+1-\frac{\sub}{\K}+1}N^{\frac{\sub}{\K}-1}
     \Big(\frac{N}{D}\Big)^{1-\frac{1}{\K}}, N{(D-\tfrac{1}{2})}^{m}\Big). \label{E:case-1_ineq}
\end{align}
As $N < D$, we see that
\[
N \leq D-\tfrac{1}{2} \ \  \text{ and } \ \  \Big( \frac{N}{D}\Big)^{1-\frac{1}{\K}} \leq 1.
\]
Thus, from \eqref{E:case-1_ineq} and the above, we have the desired claim for $n=m+1$:
\[
\sharp({(F^{m+1})}^{\dagger}(\arb) \cap D(z,r))^{\bullet} 
\leq \max \big( D^{m+1-\frac{\sub}{\K}+1}N^{\frac{\sub}{\K}-1}, {(D-\tfrac{1}{2})}^{m+1}\big).
\]}

When $\crit\neq \emptyset$, we must also consider
\smallskip
 
\noindent \textbf{Case 2.} $z \in \cup_{j=1}^q D(c_j,\delta_1)$.
\vspace{1mm}

\noindent{By our choice of $\delta_1 > 0$, there is a unique $j^0\in \{1, \dots, q\}$ such that
$z \in D(c_{j^0},\delta_1)$. For simplicity, we shall write $c := c_ {j^0}$.
If $i\in \{1, \dots, N\}$ is such that $g_i'(c) \neq 0$ then
the description in (\ref{I:ball_cover}) of the cover for $\overline { D(c, \delta_1)}$
implies that  $g_i|_{D(z,r)}$ is an injective map. Note that $D(z,r) \subset D(c, 2{\delta}_1)$. 
Thus, from our discussion on the choice of $\delta_1>0$, if $i\in \{1, \dots, N\}$ is such that $g_i'(c) = 0$ then 
$g_i$ maps at most $\ord_{c}(g_i)$ points of $D(z,r)$ to 
a single point of $\C$. Thus we have 
\[
\begin{split}
 \sharp({(F^{m+1})}^{\dagger}(\arb) \cap D(z,r))^{\bullet} 
 & \leq \sum\nolimits_{1\leq i \leq N}'  \sharp({(F^{m})}^{\dagger}(\arb) \cap  g_i(D(z,r)))^{\bullet} \\
 & \qquad + \sum\nolimits_{1\leq i \leq N}^{*} \ord_c(g_i)\,\sharp({(F^{m})}^{\dagger}(\arb) \cap g_i(D(z,r)))^{\bullet}.
\end{split}
\]
In the above inequality, the primed sum denotes the sum over \textbf{only} those indices $i$ such that
$g_i'(c) \neq 0$ while the starred sum denotes the sum over \textbf{only} those indices $i$ such that $g_i'(c) = 0$.
These will have the same meaning in the expressions below.}
\smallskip

If $g_i'(c) \neq 0$ then $|g_i'(\zt)| \leq R^{{2}/{\lambda}}$ for all $\zt \in D(z,r)\subset \J^{\delta_3}$.
Now, by the Mean Value Inequality, $g_i(D(z,r))\subset D(g_i(z), rR^{{2}/{\lambda}})$.
Similarly, if $g_i'(c) = 0$ then $|g_i'(\zt)| \leq 1$ for all $\zt \in  D(z,r)\subset D(c,2\delta_1)$, 
and we get 
$g_i(D(z,r)) \subset D(g_i(z),r)$.
Thus:
\[
\begin{split}
 \sharp({(F^{m+1})}^{\dagger}(\arb) \cap D(z,r))^{\bullet} 
 &\leq\sum\nolimits_{1\leq i \leq N}'\sharp({(F^{m})}^{\dagger}(\arb)\cap D(g_i(z),rR^{\frac{2}{\lambda}}))^{\bullet}\\
 &\qquad + \sum\nolimits_{1\leq i \leq N}^{*} \ord_c(g_i)\,\sharp({(F^{m})}^{\dagger}(\arb) \cap  D(g_i(z),r))^{\bullet}.
\end{split}
\]
Now, by the induction hypothesis and noting that $rR^{{2}/{\lambda}} \in I(\sub-1)$,
\[
\begin{split}
  \sharp({(F^{m+1})}^{\dagger}(\arb) \cap D(z,r))^{\bullet}
 &\leq \sum\nolimits_{1\leq i \leq N}' \max \big(D^{m-\frac{\sub-1}{\K}+1}N^{\frac{\sub-1}{\K}-1}, 
    {(D-\tfrac{1}{2})}^{m}\big)\\
 &\qquad+\sum\nolimits_{1\leq i \leq N}^{*}\ord_c(g_i)\,\max\big( D^{m-\frac{\sub}{\K}+1}N^{\frac{\sub}{\K}-1}, 
     {(D-\tfrac{1}{2})}^{m} \big)\\
 &\leq \left(\sum\nolimits_{1\leq i \leq N}' \Big( \frac{D}{N}\Big)^{\frac{1}{\K}}+
    \sum\nolimits_{1\leq i \leq N}^{*}\ord_c(g_i)\right) \\
 & \qquad \qquad \times\max \big( D^{m-\frac{\sub}{\K}+1}N^{\frac{\sub}{\K}-1}, 
     {(D-\tfrac{1}{2})}^{m}\big).
\end{split}
\]
Now, by our choice of $\K$ and Lemma~\ref{L:choice}, we get
\[
\begin{split}
\sharp({(F^{m+1})}^{\dagger}(\arb) \cap D(z,r))^{\bullet}
 &\leq  (D-\tfrac{1}{2}) \max \big( D^{m-\frac{\sub}{\K}+1}N^{\frac{\sub}{\K}-1}, 
    {(D-\tfrac{1}{2})}^{m} \big)\\
 &\leq \max \big( D^{m+1-\frac{\sub}{\K}+1}N^{\frac{\sub}{\K}-1}, {(D-\tfrac{1}{2})}^{m+1} \big).
\end{split}
\]
Thus we have the desired claim for $n = m+1$ in this case too.
\smallskip

From Cases 1 and 2, \eqref{E:card} is true for $n=m+1$. By induction, 
\eqref{E:card} is true for all $n \in \mathbb N$. Since $\arb \in \J(S)$ was arbitary, 
the proof is complete.
\end{proof}

\begin{remark}\label{rem:crit_empty}
We saw in the above proof that in case, for
$g_1,g_2,\dots,g_N$ as in Proposition~\ref{P:card},
$\crit(g_1,g_2,\dots,g_N) = \emptyset$, then the situation discussed in Case~2 does not even arise.
Observe that, in this circumstance, the condition $\sharp\big(\critall(g_1,g_2,\dots,g_N)\big) > 1$ is
irrelevant. In short: if
$\crit(g_1,g_2,\dots,g_N) = \emptyset$ then the conclusion of Proposition~\ref{P:card} holds true with
no conditions on $\critall(g_1,g_2,\dots,g_N)$. This follows from the argument presented under
Case~1. That, in essence, is the argument in \cite{BhaSri:hcrfgrs17}.
\end{remark} 

We now present a result that, apart from being central to the proof of Theorem~\ref{T:main}, might
be of independent interest.

\begin{theorem}\label{T:conti}
Let $S$ be a finitely generated polynomial semigroup. Suppose $S$ satisfies the property that
if $\sharp\critall[S] = 1$ then $\crit[S]\cap \exc(S)=\emptyset$. Then, for any finite set of generators 
$\gen$ of $S$, the potential $U^{\mu_{\gen}}$ is finite and continuous on $\C$.
\end{theorem}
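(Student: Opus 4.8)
The plan is to reduce the theorem to a uniform power bound on the $\mu_\gen$-measure of small discs and to extract such a bound from Proposition~\ref{P:card} and the Dinh--Sibony convergence \eqref{E:asymp2}, dealing with one degenerate configuration of critical points by passing to the second iterate of $\Gamma_\gen$. By Remark~\ref{rem:arsoveremark} it is enough to produce constants $C, \alpha, r_0 > 0$, independent of $z$, with $\mu_\gen(D(z,r)) \le Cr^\alpha$ for all $z \in \C$ and $r \in (0, r_0)$; since $\supp\mu_\gen \subseteq \J(S)$ (as each $\sigma_n$ below is), which is compact by Lemma~\ref{L:compact}, this makes $U^{\mu_\gen}$ finite and continuous at every point of $\C$. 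Fix a presentation $S = \langle g_1, \dots, g_N\rangle$ by polynomials, put $D := \sum_i \deg g_i$ and $\Gamma := \Gamma_\gen$, and note $\J(S)$ has positive capacity (it contains the Julia set of some polynomial of degree $\ge 2$), so we may fix $\arb \in \J(S)$ outside the Borel polar set $E$ of \eqref{E:asymp2}. Iterating \eqref{E:dirac_example}, $\sigma_n := D^{-n}(F_\gen^*)^n\delta_\arb \weakST \mu_\gen$, and $\sigma_n(D(z,r)) = D^{-n}\sharp\big({(F^n)}^\dagger(\arb) \cap D(z,r)\big)^\bullet$, with ${(F^n)}^\dagger = F_{(\Gamma^\dagger)^{\circ n}}$ as in Proposition~\ref{P:card}. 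Passing to weak${}^{*}$ limits, $\mu_\gen(D(z,r)) \le \mu_\gen(D(z,r')) \le \liminf_n \sigma_n(D(z,r'))$ for every $r' > r$, so it suffices to bound $\liminf_n \sigma_n(D(z,r'))$ uniformly and then let $r' \downarrow r$.

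First suppose $\sharp\critall(g_1,\dots,g_N) > 1$, or else $\crit(g_1,\dots,g_N) = \emptyset$. In the former case Proposition~\ref{P:card} applies directly, in the latter by Remark~\ref{rem:crit_empty}; in both we obtain $r_0$ and $\K$ with $\sharp\big({(F^n)}^\dagger(\arb)\cap D(z,r)\big)^\bullet \le \max\big(D^{\,n - \sub/\K + 1}N^{\sub/\K - 1}, (D - \tfrac{1}{2})^n\big)$ for $r \in (0, r_0]$, where $\sub = \sub(r)$ is fixed by $r \in I(\sub)$. Dividing by $D^n$, the $(D - \tfrac{1}{2})^n$ term vanishes as $n \to \infty$, while $D^{-n}D^{\,n - \sub/\K + 1}N^{\sub/\K - 1} = (D/N)^{1 - \sub/\K}$; the inequality $r > r_0 (D/N)^{-2\sub/\lambda}$ defining $I(\sub)$ then gives $(D/N)^{1 - \sub/\K} \le (D/N)(r/r_0)^{\lambda/(2\K)}$. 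Hence $\liminf_n \sigma_n(D(z,r)) \le C r^{\lambda/(2\K)}$ with $C = (D/N)r_0^{-\lambda/(2\K)}$, uniformly in $z$, which is the required bound with $\alpha = \lambda/(2\K)$.

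The remaining, genuinely delicate, case is $\sharp\critall(g_1,\dots,g_N) = 1$ with $\crit(g_1,\dots,g_N) \ne \emptyset$; then $\critall(g_1,\dots,g_N) = \crit(g_1,\dots,g_N) = \{a\}$ for some $a \in \J(S)$, and Lemma~\ref{L:crit_condtn} together with the hypothesis on $S$ gives $a \notin \exc(S)$, so $O^-(a)$ is infinite and some $g_k$ satisfies $g_k(a) \ne a$. Since $a$ is the only critical point of all the $g_i$, the Riemann--Hurwitz relation \eqref{E:RH} forces $g_i(z) = B_i(z - a)^{\deg g_i} + g_i(a)$ for each $g_i$ of degree $\ge 2$; for such a $g_i$ the composition $g_i \circ g_k$ is critical at every point of $g_k^{-1}(a)$, which is nonempty and disjoint from $\{a\}$. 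Therefore the list $\{g_i \circ g_j\}_{1 \le i,j \le N}$ (repetitions allowed) has $\sharp\critall \ge 2$. Now $\mu_\gen$ is also the limit of $(D^2)^{-n}(F^*_{\Gamma^{\circ 2}})^n\delta_\arb = \sigma_{2n}$, where $\Gamma^{\circ 2} = \sum_{i,j}{\rm graph}(g_i \circ g_j)$; the proof of Proposition~\ref{P:card} uses only $\J(S)$, the quantities $M$ and $\lambda$ formed from the list on $\J(S)$, and the inclusion of the list in $S$ (which yields ${(F^n)}^\dagger(\arb) \subset \J(S)$), while $M > 1$ persists for this list because a repelling fixed point of any degree-$\ge 2$ element $h \in S$ is a repelling fixed point of $h^2$, and $h^2$ is a word in $\{g_i \circ g_j\}$. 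Applying Proposition~\ref{P:card} to $\Gamma^{\circ 2}$ and repeating the computation of the previous paragraph with $D^2, N^2$ in place of $D, N$ yields $\liminf_n \sigma_{2n}(D(z,r)) \le C' r^{\alpha'}$ with $\alpha' > 0$, uniformly in $z$; since $\sigma_{2n} \weakST \mu_\gen$ too, the first paragraph applies and finishes this case.

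Combining the cases and invoking Remark~\ref{rem:arsoveremark} proves the theorem. The principal difficulty is the last case: one must notice that composing the generators once destroys the degenerate single-critical-point configuration precisely when $a \notin \exc(S)$ (it does \emph{not} for the semigroups isolated in Section~\ref{SS:excep}, all of whose elements have the form $B(z-a)^m + a$), and one must verify that Proposition~\ref{P:card} is insensitive to replacing a generating set of $S$ by a finite list of elements of $S$ with the same Julia set.
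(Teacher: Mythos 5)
Your proof is correct and follows essentially the same route as the paper's: reduce, via Remark~\ref{rem:arsoveremark}, to a uniform bound $\mu_{\gen}(D(z,r))\leq Cr^{\alpha}$ extracted from Proposition~\ref{P:card} together with \eqref{E:asymp2}, and dispose of the residual case $\sharp\critall(\gen)=1$, $\crit(\gen)\neq\emptyset$ by passing to the list $\gen^{2}$. The only small divergences are presentational: (i)~you obtain $\sharp\critall(\gen^{2})>1$ by first deriving the Riemann--Hurwitz-forced form $g_i(z)=B_i(z-a)^{\deg g_i}+g_i(a)$ and then finding $g_k$ with $g_k(a)\neq a$, whereas the paper argues directly from $\critall(\gen^{2})=\crit(\gen)\cup\bigcup\nolimits_i g_i^{-1}(\crit(\gen))$ and the infinitude of $O^-(a)$\,---\,and note that your clause ``some $g_k$ satisfies $g_k(a)\neq a$'' appears one sentence before the Riemann--Hurwitz observation that actually justifies it, so the order should be swapped; and (ii)~rather than auditing which features the proof of Proposition~\ref{P:card} uses, the paper simply notes $\J(\langle\gen^{2}\rangle)=\J(S)$ and applies the proposition verbatim to $S'=\langle\gen^{2}\rangle$ with generating list $\gen^{2}$.
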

\begin{proof}
We fix a set of generators $\gen=\{ g_1, g_2,\dots,g_N\}$ of $S$.
Let $E(\gen)$ denote the Borel polar set associated with $\gen$ described just prior to
\eqref{E:asymp2}. It is a classical fact that for any 
polynomial $g$ with $\deg(g)\geq 2$,
\[
\cpt(\J(g))=|\lead{g}|^{\frac{1}{1-\deg(g)}}>0.
\]
Since for any $g\in S$ we have $\J(g) \subset \J(S)$, it follows that
$\cpt(\J(S))>0$. Therefore, $\J(S)\setminus E(\gen)\neq \emptyset$. Hence, we
can pick a $\fixed\in \J(S)\setminus E(\gen)$, which we shall \textbf{fix}
for the remainder of this proof.
Write $\mu_n := \mu^{\fixed}_n:= D^{-n}(F_{\gen}^*)^n(\delta_{\fixed})$
(recall the notation introduced in Section~\ref{SS:DinSib}).
Then, by \eqref{E:asymp2},
\begin{equation}\label{E:asymp3}
\mu_n \weakST \mu_{\gen} \;\; \text{as $n\to \infty$.}
\end{equation}

To begin with, we consider
the case when $\sharp\big(\critall(\gen)\big) > 1$. We apply Proposition~\ref{P:card}
to $g_1, g_2,\dots, g_N$ that we have fixed above.
Then, by this proposition, there exists $r_0>0$
such that if $r \in (0,r_0]$ then
\begin{equation}\label{E:mu_n_bound}
\sharp({(F^{n})}^{\dagger}(\fixed) \cap D(z,r))^{\bullet} 
\leq \max \big( D^{n-\frac{\sub}{\K}+1}N^{\frac{\sub}{\K}-1}, {(D-\tfrac{1}{2})}^{n}\big)
\end{equation}
(using the abbreviated notation in Proposition~\ref{P:card}) for any $z\in \C$.
Then, in view of the formula
\eqref{E:dirac_example} and \eqref{E:mu_n_bound}, we have for
$n$ sufficiently large:
\[
\mu_n (D(z,r))  =\frac{1}{D^n} \sharp({(F^{n})}^{\dagger}(\fixed) \cap D(z,r))^{\bullet} 
\leq \Big(\frac {D}{N} \Big)^{1-\frac{\sub}{\K}},
\]
where $\sub$ is the unique integer such that $r \in I(\sub)$\,---\,the latter as introduced in
the statement of Proposition~\ref{P:card}.  Thus $r>r_0 R^{{-2\sub}/{\lambda}}$. Recalling
that $R := D/N$, the last two inequalities give 
\begin{equation}\label{E:meas_bound}
\mu_n (D(z,r))\leq \bigg(\frac{R}{{r_0}^{\lambda/2\K}}\bigg)\,r^{\frac {\lambda}{2\K}}=C_1r^\alpha
\end{equation}
for all $n$ sufficiently large, where $C_1:={R}/({{r_0}^{{\lambda}/{2\K}}})>0$, $\alpha:={\lambda}/{2\K}>0$, 
$r \in (0,r_0]$ and $z \in \C$.
\smallskip

From \eqref{E:asymp3} and \eqref{E:meas_bound}, we see that for every $r \in (0,r_0]$,
\[
\mu_{\gen}(D(z,r)) \leq C_1r^\alpha.
\]
Invoking Remark~\ref{rem:arsoveremark}, $U^{\mu_{\gen}}$ is finite and continuous on $\C$.
\smallskip

Now consider the case when $\sharp\big(\critall(\gen)\big) = 1$. In this case,
by Lemma~\ref{L:crit_condtn}, $\crit(\gen)\cap \exc(S)=\emptyset$. 
If $\crit(\gen) = \emptyset$ then, by Remark~\ref{rem:crit_empty}, the conclusion of
Proposition~\ref{P:card} still holds true. Thus we get \eqref{E:meas_bound} (with $\K = 1$ this time).
Consequently, arguing as before, $U^{\mu_{\gen}}$ is finite and continuous.
\smallskip

 It remains to consider the case when
$\sharp\big(\critall(\gen)\big) = 1$, $\crit(\gen) \cap \exc(S)=\emptyset$ and 
$\crit(\gen) \neq \emptyset$. Since $\sharp\big(\critall(\gen)\big) = 1$
and $\crit(\gen) \neq \emptyset$, we get that
\begin{equation}\label{E:crit_eq} 
\crit(\gen)=\critall(\gen),
\end{equation}
Consider the holomorphic correspondence associated with the \textbf{list} of polynomials
$\gen^2:=\{g_i \circ g_j: 1\leq i,j \leq N\}^{\bullet}$,  i.e.,
\[
\Gamma_{\gen^2}:=\sum_{1\leq i,j \leq N} \text{graph}(g_i\circ g_j).
\]
Also, let $S' := \langle g_i \circ g_j: 1\leq i,j \leq N \rangle$.
\smallskip

It is easy to see that for $g\circ g\in S'$ for any $g\in S$. From this and the classical fact
that $\J(g) = \J(g\circ g)$, we deduce\,---\,in view of Result~\ref{R:HM}\,---\,that
$\J(S') = \J(S)$. Thus the $\fixed$ that we had fixed above belongs to $\J(S')$. 
Write $\mu_n' := D^{-2n}\big(F^*_{\!\gen^2}\big)^n(\delta_{\fixed})$.
It is easy to see that $\mu_n'=\mu_{2n}$ for all 
$n \in \Z_+$. Thus, by \eqref{E:asymp3}:
\begin{equation}\label{E:asymp4}
\mu_n' \weakST \mu_{\gen} \;\; \text{as $n\to \infty$.}
\end{equation}
Now, let
\[
\critall(\gen^2):=\{c \in \C:  (g_i \circ g_j)'(c)=0 \text{ for some } i,j\in \{1,\dots,N\}\}.
\]
Observe that for the list $\gen^2$,
\begin{equation}\label{E:critall_2nd_iterate}
\critall(\gen^2)=\critall(\gen)\cup \Big[\cup_{i=1}^N g_i^{-1}\big(\critall(\gen)\big)\Big] 
=\crit(\gen)\cup \Big[\cup_{i=1}^N g_i^{-1}\big(\crit(\gen)\big)\Big] ,
\end{equation}
where the second equality is a consequence of \eqref{E:crit_eq}.
We now argue that $\sharp\big(\critall(\gen^2)\big) > 1$.
Since $\crit(\gen)\cap \exc(S)=\emptyset$ and $\sharp\big(\crit(\gen)\big)=1$, there exists $x$
such that $x \notin \crit(\gen)$ but $g_i(x) \in \crit(\gen)$ for some $i\in \{1,\dots,N\}$, i.e.,
\[
x\in \cup_{i=1}^N\,g_i^{-1}\big(\crit(\gen)\big).
\]
Consequently, by \eqref{E:critall_2nd_iterate},
$\sharp\big(\critall(\gen^2)\big) >1$. Thus Proposition~\ref{P:card} can be applied
to the correspondence $\Gamma_{\gen^2}$ and, as the $\fixed$ we had fixed
lies in $\J(S')$, its conclusion applies to this $\fixed$.
By a computation analogous to the one in the second paragraph of this proof\,---\,with 
$\mu_n'$ replacing $\mu_n$ and using \eqref{E:asymp4}\,---\,we deduce that
there exists $r_0>0$ such that 
if $ r \in (0,r_0]$ then
\[
\mu_{\gen}(D(z,r)) \leq C_2r^{\beta}
\]
for some positive constants $C_2$ and $\beta$.
Once again, by Remark~\ref{rem:arsoveremark}, $U^{\mu_{\gen}}$ is finite and 
continuous on $\C$
in this final case as well.
\end{proof}

Recall that for a polynomial $g$, $\lead{g}$ denotes the coefficient of the highest degree term 
of the polynomial $g$.

\begin{lemma}\label{L:compu}
Let $S$ be a finitely generated polynomial semigroup and let $\gen=\{ g_1, g_2,\dots,g_N\}$ 
be a set of generators of $S$ then
\[
\lim_{n \to \infty} \frac{1}{D^n} \log \Bigg(\prod_{l(g)=n}\!|\lead{g}|\Bigg) \ \text{ exists }
\]
and equals $(D-N)^{-1}\log |\lead{g_1}\lead{g_2}\dots\lead{g_N}|$.
\end{lemma}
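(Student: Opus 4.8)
The plan is a direct computation, organized around the multiplicativity of leading coefficients under composition that was already exploited in the proof of Lemma~\ref{L:repre}. Throughout, write $d_i := \deg(g_i)$, let $D := \sum_{i=1}^N d_i$ (so $D > N$, since $\gen$ must contain an element of degree at least $2$), and set $\mathcal L := \log|\lead{g_1}\lead{g_2}\dots\lead{g_N}| = \sum_{i=1}^N \log|\lead{g_i}|$. The product $\prod_{l(g)=n}$ runs over all $N^n$ words $g_{i_n}\circ\dots\circ g_{i_1}$ of length $n$, i.e. over all $(i_1,\dots,i_n)\in\{1,\dots,N\}^n$. First I would recall that, for such a word,
\[
\lead{g_{i_n}\circ\dots\circ g_{i_1}} = \lead{g_{i_n}}\,{\lead{g_{i_{n-1}}}}^{d_{i_n}}\cdots{\lead{g_{i_1}}}^{d_{i_2}\cdots d_{i_n}},
\]
so that $\log\big|\lead{g_{i_n}\circ\dots\circ g_{i_1}}\big| = \sum_{k=1}^n\big(\prod_{j=k+1}^n d_{i_j}\big)\log|\lead{g_{i_k}}|$. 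Taking logarithms in the statement then gives
\[
\log\prod_{l(g)=n}|\lead{g}| \;=\; \sum_{(i_1,\dots,i_n)}\,\sum_{k=1}^n\Big(\prod_{j=k+1}^n d_{i_j}\Big)\log|\lead{g_{i_k}}|.
\]

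The main step is to interchange the two sums and evaluate the sum over $(i_1,\dots,i_n)$ for each fixed $k$. The summand depends only on the indices $i_k, i_{k+1},\dots,i_n$: the $N^{k-1}$ choices of $(i_1,\dots,i_{k-1})$ contribute only an overall factor $N^{k-1}$; each index $i_j$ with $j>k$ ranges independently and factors out as $\sum_{i_j} d_{i_j} = D$, giving $D^{n-k}$ in all; and the remaining sum over $i_k$ yields $\sum_{i}\log|\lead{g_i}| = \mathcal L$. Hence
\[
\log\prod_{l(g)=n}|\lead{g}| \;=\; \mathcal L\sum_{k=1}^n N^{k-1}D^{n-k} \;=\; \mathcal L\,D^{n-1}\,\frac{1-(N/D)^n}{1-N/D}.
\]

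Finally I would divide by $D^n$ and let $n\to\infty$. Since $D>N$ we have $(N/D)^n\to 0$, so
\[
\frac{1}{D^n}\log\prod_{l(g)=n}|\lead{g}| \;=\; \mathcal L\,\frac{1-(N/D)^n}{D-N} \;\longrightarrow\; \frac{\mathcal L}{D-N} \;=\; (D-N)^{-1}\log|\lead{g_1}\lead{g_2}\dots\lead{g_N}|,
\]
which proves both that the limit exists and that it has the claimed value. There is no genuine obstacle here: the only points meriting care are the bookkeeping of which indices a given summand depends on (so the partial sums factor as a product over independent index blocks) and the strict inequality $D>N$, which is exactly what forces the geometric series to converge. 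As a byproduct, this computation identifies the limit with the modified Robin constant $F_{\gen}$ appearing in Theorem~\ref{T:bound}.
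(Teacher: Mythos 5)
Your proposal is correct and follows essentially the same route as the paper: both exploit the multiplicativity of leading coefficients under composition, reorganize the $N^n$-fold product (you at the level of logs, the paper at the level of products) by counting that each factor $\log|\lead{g_i}|$ appears with total weight $\sum_{k=1}^n N^{k-1}D^{n-k}$, and then divide by $D^n$ and use $D>N$ to evaluate the geometric sum. The bookkeeping and the final expression $\frac{1-(N/D)^n}{D-N}\log|\lead{g_1}\cdots\lead{g_N}|$ match the paper's computation exactly.
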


\begin{proof}
Fix $n\in \Z_+$. Recall that if $g=g_{i_n}\circ \dots \circ g_{i_1}$ then
\[
\lead{g}=\lead{g_{i_n}}{\lead{g_{i_{n-1}}}}^{d_{i_n}} \dots \lead{g_{i_1}}^{d_{i_2} 
\dots d_{i_n}},
\]
where $d_{i_j}= \deg (g_{i_j})$. Hence,
\begin{equation}\label{E:lead_g}
\prod_{l(g)=n}\!|\lead{g}|
= \prod_{(i_1,\dots, i_n)\,\in\,\{1, \dots, N\}^n}\!\!|\lead{g_{i_n}}{\lead{g_{i_{n-1}}}}^{d_{i_n}}\dots \lead{g_{i_1}}^{d_{i_2}
\dots d_{i_n}}|.
\end{equation}
If we fix a word $(g_{i_n}\circ\dots \circ g_{i_{k+1}})$, $1\leq k\leq n$, and
an $i\in \{1,\dots, N\}$, then the number of words of the form
$g_{i_n}\circ\dots \circ g_{i_{k+1}}\circ g_i\circ f$\,---\,where
\[
\begin{split}
g_{i_n}\circ\dots \circ g_{i_{k+1}}:=&\,{\sf id}_{\sph} \; \text{ if $k =n$, and} \\ 
f =&\,\begin{cases}
	\text{a word with $l(f) = (k-1)$}, &\text{if $k\geq 2$}, \\
	{\sf id}_{\sph}, &\text{if $k=1$}
	\end{cases}
\end{split}
\]
---\,that contribute a factor of $\lead{g_i}^{d_{i_{k+1}}\dots d_{i_n}}$ to the 
right-hand side of \eqref{E:lead_g} is $N^{k-1}$.
Here, we shall set $d_{i_{k+1}}\dots d_{i_n}:= 1$ if $k=n$.
Thus the right-hand side of \eqref{E:lead_g} can be reorganized as follows:
\[
\begin{split}
  \prod_{l(g)=n}|\lead{g}| &=
  \prod_{i=1}^N\Bigg( |\lead{g_i}|^{N^{n-1}}\times\prod_{k=1}^{n-1}
     \Bigg(\prod_{(i_{k+1},\dots, i_n)\,\in\,\{1,\dots,N\}^{n-k}}
   \Big({|\lead{g_i}|}^{d_{i_{k+1}}\dots d_{i_n}} \Big)^{N^{k-1}}\Bigg)\Bigg)\\
  &=\prod_{i=1}^N\,\prod_{k=1}^n 
  \Big(|\lead{g_i}|^{D^{n-k}}\Big)^{N^{k-1}}\\
  &=\prod_{i=1}^N 
  |\lead{g_i}|^{(D^{n-1}+ND^{n-2}+\dots+DN^{n-2}+N^{n-1})},
\end{split}
\]
since $d_1+d_2+\dots +d_N = D$. For simplicity of notation, let us write
\[
A:=|\lead{g_1}\lead{g_2}\dots\lead{g_N}|.
\]
Then, it follows from above that
\[
\begin{split}
\frac{1}{D^n} \log\Bigg(\prod_{l(g)=n}|\lead{g}|\Bigg) 
&=\frac{1}{D^n} \log \left(A^{(D^{n-1}+ND^{n-2}+\dots+DN^{n-2}+N^{n-1})}\right)\\
&=\frac{1-{(N/D)}^n}{D-N}\,\log A.
\end{split}
\]
Since $N<D$ by assumption, $(N/D)^n \to 0$ as $n\to \infty$, from which the result follows.
\end{proof}

Before proving Theorem~\ref{T:main}, we formally define a term that was used in 
Section~\ref{S:intro}.

\begin{definition}\label{D:regular}
Let $X$ be a topological space, and let $u: X \to [-\infty, \infty)$ be a 
function that is locally bounded above on $X$. Its \emph{upper semicontinuous regularization} 
$u^*:X \to [-\infty, \infty)$ is defined by
\[
u^*(x):=\limsup_{y \to x}u(y)=\inf_{\mathscr{N}}
(\sup_{y \in \mathscr{N}}u(y)) \quad \forall x\in X,
\]
the infimum being taken over all neighbourhoods $\mathscr{N}$ of $x$.
\end{definition}

It is easily checked that $u^*$ is an upper semicontinuous function on $X$ such that 
$u^* \geq u$, and also that it is the least upper semicontinuous function that dominates $u$.
\smallskip

\begin{proof}[The proof of Theorem~\ref{T:main}]
Note that if $g$ is a polynomial then
\[
g(z)-a=\lead{g} \prod_{j=1}^{\deg (g)}(z-x_j),
\]
where $x_1,\dots,x_{\deg (g)}$ are the solutions of $g(z)=a$ repeated according to multiplicity.
Now fix a set of generators $\gen=\{ g_1, g_2,\dots,g_N\}$.
Fix $\fixed \in \C \setminus E(\gen)$, where $E(\gen)$ as in the proof of Theorem~\ref{T:conti}, and
let $\mu^{\fixed}_n$ be as defined in the first paragraph of that proof.
Set $\mu_n:= \mu^{\fixed}_n$.
Then, by the definition of the logarithmic potential,
\[
\begin{split}
U^{\mu_n}(z) & =\frac{1}{D^n} 
\displaystyle \sum_{l(g)=n}\; \sum_{x \in {g^{-1}\{\fixed\}}^{\bullet}}\!\! \log \left (\frac{1}{|z-x|}\right)\\
& =\frac{1}{D^n} \log \left (\displaystyle \prod_{l(g)=n}\frac{|\lead{g}|}{|g(z)-\fixed|}\right).
\end{split}
\]
Therefore, by Lemma~\ref{L:compu},
\[
\begin{split}
\liminf_{n \to \infty} U^{\mu_n }(z)
 & =\liminf_{n \to \infty} \frac{1}{D^n} 
 \log \left (\displaystyle \prod_{l(g)=n}\frac{|\lead{g}|}{|g(z)-\fixed|}\right)\\
 & = \liminf_{n \to \infty} \frac{1}{D^n} 
 \log \left (\displaystyle \prod_{l(g)=n}\frac{1}{|g(z)-\fixed|}\right) 
+\lim_{n \to \infty} \frac{1}{D^n} \log \left (\displaystyle \prod_{l(g)=n}|\lead{g}|\right)\\
 & = -G_{\gen}(z)+ \frac{\log A}{D-N},
\end{split}
\]
where $A=|\lead{g_1}\lead{g_2}\dots\lead{g_N}|$.
\smallskip

Since, by \eqref{E:asymp2}, $\mu_n \to \mu_{\gen}$ in the weak* topology, Result~\ref{R:PDLE} implies:
\[
\begin{split}
U^{\mu_{\gen}}(z) &\leq  -G_{\gen}(z)+ \frac{\log A}{D-N} \qquad \text{ for every } z \in \C,\\
U^{\mu_{\gen}}(z) &=-G_{\gen}(z)+ \frac{\log A}{D-N} \qquad \text{ for q.e. } z \in \C.
\end{split}
\]
By Theorem~\ref{T:conti}, $U^{\mu_{\gen}}$ is continuous. Thus $G_{\gen}$ is locally bounded above.
Hence, $G^*_{\gen}$ (the upper semicontinuous regularization of $G_{\gen}$) exists.
Since $G^*_{\gen}$ is the least upper semicontinuous function that dominates $G_{\gen}$
and $U^{\mu_{\gen}}$ is continuous, we get
\begin{align}
U^{\mu_{\gen}}(z) 
&\leq -G^*_{\gen}(z)+ \frac{\log A}{D-N} \qquad \text{ for every } z \in \C,\label{E:LE1}\\
U^{\mu_{\gen}}(z) 
&=- G^*_{\gen}(z)+ \frac{\log A}{D-N} \qquad \text{ for q.e. } z \in \C.\label{E:LE2}
\end{align}
Now, if $U^{\mu_{\gen}}$ is continuous at some $z_0 \in \C$ and
\[
U^{\mu_{\gen}}(z_0) < - G^*_{\gen}(z_0)+ \frac{\log A}{D-N}
\]
then by lower semicontinuity of $-G^*_{\gen}$, we can find an open disc $\Delta$ with centre $z_0$ such that
for every $z \in \Delta$,
\[
U^{\mu_{\gen}}(z) < - G^*_{\gen}(z)+ \frac{\log A}{D-N}.
\]
Since $\cpt(\Delta) > 0$, the last inequality contradicts \eqref{E:LE2}. Thus
\[
U^{\mu_{\gen}}(z_0) =- G^*_{\gen}(z_0)+ \frac{\log A}{D-N}.
\]
As $U^{\mu_{\gen}}$ is continuous, the above argument implies that
\begin{equation}\label{E:equality_on_C}
U^{\mu_{\gen}}(z)=- G^*_{\gen}(z)+ \frac{\log A}{D-N} \quad \forall z \in \C.
\end{equation}
Therefore, $G^*_{\gen}$ is continuous on $\C$. For the remainder of our argument we shall 
draw upon results in Section~\ref{S:potential}. We introduce some notation in order to
identify objects featuring in our proof with those in
Section~\ref{S:potential}. We define $\Q:=G^*_{\gen}|_{\J(S)}$. Then
\begin{equation}\label{E:frost}
U^{\mu_{\gen}}(z) + \Q(z) = \frac{\log A}{D-N} \quad \forall z \in \J(S).
\end{equation}
In the notation of Section~\ref{S:potential}, consider the closed set $\Sigma:= \J(S)$.
Now observe:
\renewcommand{\theenumi}{{\emph{\roman{enumi}}}}
  \begin{enumerate}[leftmargin=27pt]
\item $\Q$ is continuous on $\Sigma$;
\item\label{I:finite} $\Q(z)$ is finite for every $z \in \Sigma$.
\end{enumerate}
As seen earlier, since $S$ contains a polynomial of degree at least 2,
\[
\cpt(\Sigma)=\cpt(\J(S))>0.
\]
By observation (\ref{I:finite}) above and 
since $\Sigma$ is of positive capacity, the set on which $\Q < \infty$ is of positive capacity. 
Since $\Q$ is continuous on $\Sigma$ and $\J(S)$ is compact, 
it follows that $\Q$ is an external field (or equivalently, 
$\wgen(z):= \exp(-\Q(z))$ is an admissible weight).
\smallskip

Since $\supp(\mu_{\gen}) \subset \J(S)$, $\mu_{\gen}$ is compactly supported in $\C$.
It is easy to see that, since $U^{\mu_{\gen}}$ is continuous on $\C$, 
$\mu_{\gen}$ has finite logarithmic energy.
By \eqref{E:frost} and Result~\ref{R:frost}, $\mu_{\gen}$ is the weighted equilibrium 
measure associated with the external field $\Q$ and 
\begin{equation}\label{E:robin}
F_{\gen}:=F_{\wgen}=\frac{1}{D-N}\log |\lead{g_1}\lead{g_2}\dots\lead{g_N}|
\end{equation}
is the modified Robin constant for $\Q$.
\end{proof}

\begin{remark}\label{Rem:independence}
We point out that the last proof reveals that the function $G^*_{\gen}$ does not
depend on the choice of $a \in \C$, provided $a\notin E_{\gen}$. With this exception, we know from
\eqref{E:asymp2} that $\mu_{\gen}$, and hence $U^{\mu_{\gen}}$, is independent of $a$.
Thus, in view of \eqref{E:equality_on_C}, the stated independence of the choice of $a$ follows.
We shall exploit this fact in Section~\ref{S:bound}, where we shall work with $G^*_{\gen}$ and
$\Q$.
\end{remark}

\section{The proof of Theorem~\ref{T:bound}}\label{S:bound}

Before giving a proof of Theorem~\ref{T:bound}, we state couple of results that we will need.

\begin{result}[Pommerenke, \cite{pommerenke:upsPm79}]\label{R:unif_per_cap}
Let $\Sigma$ be a non-empty closed subset of $\C$. Then, 
$\Sigma$ is uniformly perfect if and only if there is a constant $\delta >0$ such that
$\cpt(\Sigma \cap \overline{D(z,r)})\geq \delta r$ for all $z \in \Sigma$ and $0<r< {\rm diam}(\Sigma)$
(where ${\rm diam}(\Sigma)$ denotes the diameter of $\Sigma$).
\end{result}

\begin{remark}
The above result was stated in \cite{pommerenke:upsPm79} for unbounded closed sets in $\C$. 
However, the only place where the unboundedness of $\Sigma$ is needed in its proof is in taking
$\Sigma$ to have the property stated in Remark~\ref{Rem:unif_perf_planar}, which is almost
immediate when $\Sigma$ is unbounded.
\end{remark}

The measure $\mu_{\gen}$ in the next result is as in the previous sections.

\begin{result}[Boyd, \cite{boyd:imfgrs99}]\label{R:boyd_support}
Let $S$ be a finitely generated rational semigroup where each element of $S$ has degree at 
least 2. Let $\gen$ be a finite set of generators. Then
$\supp(\mu_{\gen}) = \J(S)$.
\end{result}

The exterior boundary of $\J(S)$ will be denoted by by $\partial_e\J(S)$.
Also, we recall that owing to Theorem~\ref{T:conti},
\eqref{E:equality_on_C} tells us that $G^*_{\gen}$, and hence $\Q$, are continuous. We
shall use this without any further comment in%

\begin{proof}[The proof of Theorem~\ref{T:bound}]
We begin with the proof of part~\ref{I:ext_field}. Fix a finite set of generators $\gen$.
By hypothesis, the orbit of $z_0$, $O(z_0)$, is not dense in $\C$. Thus there exist $p \in \C$ and $\eps>0$
such that $O(z_0) \cap D(p, 2\eps)= \emptyset$.
As $\cpt (D(p, \eps)) >0$, there is a point $\fixed \in D(p, \eps)$ such that $\fixed \notin  E(\gen)$.
Recall that, $G^*_{\gen}$ is the upper 
semicontinuous regularization of the function
\[
G_{\gen}(z)=\limsup_{n \to \infty} \frac{1}{D^n} 
\log \left (\prod_{l(g)=n} {|g(z)-\fixed|}\right)
\]
and the external field $\Q=G^*_{\gen}|_{\J(S)}$. Note that, by Remark~\ref{Rem:independence},
$\Q$ does not depend on the choice of $\fixed$, where $\fixed \in \C \setminus E(\gen)$.
Let $\rho_1 >0$ be such that
\[
|g_i(z)|> |z|,\quad |g_i(z)-\alpha|>\frac {|\Lambda(g_i)|}{2}|z|^{\deg (g_i)}
\quad \forall z: |z|>\rho_1,
\]
for $i=1,\dots,N$ and $\alpha=0, \fixed$.
Then, owing to Lemma~\ref{L:compu} and the above choice of $\rho_1$, we get
\[
G_{\gen}(z) \geq \log|z|+\frac{1}{D-N}\big(\log |\lead{g_1}\lead{g_2}\dots\lead{g_N}|
-N\log2\big), \quad \forall z:|z|>\rho_1.
\]
Hence, there exists a $\rho_2\geq \rho_1$ such that $G_{\gen}(z)>0$ whenever $|z|>\rho_2$.
Since $O(z_0)$ is unbounded, there exists a word $h$ such that $|h(z_0)|>\rho_2$, whence
$G_{\gen}(h(z_0))>0$. Let $l(h)=M$. 
Observe that by the choice of $\fixed$, for any $g\in S$, $|g(z_0)-\fixed|> \eps$. Thus, for $n \geq M+1$,
\[
\log \left (\prod_{l(g)=n} {|g(z_0)-\fixed|}\right)
> \sum_{l(g)=n-M}\log{|g(h(z_0))-\fixed|}+(N^n-N^{(n-M)}) \log \eps.
\]
Divide both sides above by $D^n$. Then, it follows from the definition of $G_{\gen}$, 
and as $N/D< 1$, that
$G_{\gen}(z_0) \geq D^{-M}  G_{\gen}(h(z_0))$.
Recall that $G_{\gen}(h(z_0))>0$. It follows that
$\Q(z_0)= G^*_{\gen}(z_0) \geq G_{\gen}(z_0) >0$. Since the choice of $\gen$
was arbitrary, this establishes \ref{I:ext_field}.
\smallskip

We shall prove part~\ref{I:nec_cond} by establishing its contrapositive.
Assume that for every $z \in \J(S)$, $O(z)$ is bounded. Now fix a finite set of
generators $\gen$. By Lemma~\ref{L:compact},
there exists $R >0$ (which depends on $\gen$) such that $O(z)\subset D(0,R)$
for every $z \in \J(S)$. Choose $\fixed \in \C$
such that $|\fixed| > 2R$ and $\fixed \notin E(\gen)$. Observe:
\[
 N^n \log R\leq \log \left (\prod_{l(g)=n} {|g(z)-\fixed|}\right) \leq N^n \log (R+|a|)
 \quad \forall z\in \J(S).
\]
Since $N/D < 1$, it follows that $G_{\gen}(z)=0$ for every $z \in \J(S)$.
By \eqref{E:LE2},
$G_{\gen}=G^*_{\gen}$ quasi-everywhere on $\C$.
In particular, $\Q = 0$ quasi-everywhere on $\J(S)$. Suppose there is some
$\zeta\in \J(S)$ such that $\Q(\zeta)\neq 0$. Then, as $\Q$ is continuous, there exists a
disc $\Delta$ with centre $\zeta$ such that $\Q\neq 0$ on $\J(S)\cap \Delta$.
In view of Results~\ref{R:uniper} and~\ref{R:unif_per_cap}, $\J(S)\cap \Delta$ must have
positive capacity: a contradiction. Thus $\Q \equiv 0$. Since this
is true for any choice of $\gen$, \ref{I:nec_cond} follows.
\smallskip

It now remains to prove the capacity estimate in \eqref{E:bound_cap}.
To do so, we use the $F$-functional of Mhaskar and Saff with external field $\Q$ given by
Theorem~\ref{T:main}. We introduce some notation in order to identify
objects pertinent to our analysis with those in Section~\ref{S:potential}.
Abbreviate $\mathcal{S}_{\gen}:= \mathcal{S}_{w_{\gen}}$: the latter is as introduced
just prior to Result~\ref{R:frost}. 
By Theorem~\ref{T:main}, $\mathcal S_{\gen}=\supp(\mu_{\gen})$. With
this notation, Result~\ref{R:ms} gives
\[
\log \cpt(\mathcal S_{\gen}) = - F_{\gen} + \int_{\J(S)} \Q d\sigma_{\mathcal S_{\gen}}.
\]
By assumption, each element of $S$ is of degree at least 2. Thus, by Result~\ref{R:boyd_support}
and since $\mathcal S_{\gen}=\supp(\mu_{\gen})$, we have $\mathcal S_{\gen} =\J(S)$.
Also note that, by \eqref{E:robin}, the modified Robin constant is
$F_{\gen}=(D-N)^{-1}\log |\lead{g_1}\lead{g_2}\dots\lead{g_N}|$.
Consequently, we get
\begin{equation}\label{E:cbe}
\log \cpt(\J(S)) = \frac{1}{N-D}\log |\lead{g_1}\lead{g_2}\dots\lead{g_N}| 
+\int_{\J(S)} \Q d\equij.
\end{equation}

Let $V$ denote the unbounded component of $\F(S)$, whence 
$\partial_e \J(S):= \partial V$.
With $z_0$ as in our hypothesis, the proof of part~\ref{I:ext_field} tells us that
$G^*_{\gen}(z_0) >0$.
We claim that there exists
$z_1 \in \partial_e \J(S)$ such that $\Q(z_1) >0$.
If $z_0 \in \partial_e \J(S)$, take $z_1:= z_0$ and we are done. If 
$z_0 \notin \partial_e \J(S)$, then $\Omega:= \C \setminus (\partial_e \J(S)\cup V)$
is a non-empty open set. Recall that, by \eqref{E:equality_on_C}, $G^*_{\gen}$ is a continuous
subharmonic function on $\C$. Since $z_0\in \Omega$ and $G^*_{\gen}(z_0) > 0$,
by the maximum principle applied to
$\left. G^*_{\gen}\right|_{\Omega}$, the desired claim follows.
\smallskip

Let $x \in V \setminus \{\infty\}$. 
By Result~\ref{R:boyd04}, $x$ also belongs to the unbounded component of $\basin(S)$.
Thus, as argued just prior to Lemma~\ref{L:compact}, given $r>0$, there exists 
$n_r(x) \in \Z_+$ such that if $l(g)\geq n_r(x)$ then $|g(x)| > r$. 
It follows by taking $r>0$ large enough that $G_{\gen}(x)\geq 0$.
Since $G^*_{\gen} \geq G_{\gen}$, and $x\in V$ was arbitrary,
$G^*_{\gen}\geq 0$ on $V$. So, as each point in $\partial_e \J(S)$ is a limit point of $V$,
it follows from the continuity of $G^*_{\gen}$ that $G^*_{\gen}\geq 0$ on $\partial_e \J(S)$. Thus
$\Q\geq 0$ on $\partial_e \J(S)$.
\smallskip

We now appeal to potential theory to prove $\supp(\equij) = \partial_e \J(S)$.
It is well known that $\supp(\equij)\subseteq \partial_e\J(S)$.
Let $\zeta \in \partial_e \J(S)$ and assume that $\zeta \notin \supp(\equij)$. Then,
there exists an open disc $\Delta$ with centre $\zeta$ on which
$U^{\sigma_{\J(S)}}$ is harmonic. By Result~\ref{R:uniper}, $\J(S)$ is uniformly perfect. Thus, by
Result~\ref{R:unif_per_cap}, it follows that $V$ is regular\,---\,see, for 
instance \cite[Corollary~2 to Theorem~III-62]{tsuji:ptmft75}.
As $\zt$ is a regular boundary point of $V$, and $\infty\in V$, it is a classical fact that
$U^{\sigma_{\J(S)}}(\zt)= -\log \cpt (\J(S))$. But by Frostman's theorem,
$U^{\sigma_{\J(S)}}\leq -\log \cpt (\J(S))$ on $\C$. Thus, applying the maximum principle to
$\left. U^{\sigma_{\J(S)}}\right|_{\Delta}$, we have $\left. U^{\sigma_{\J(S)}}\right|_{\Delta}\equiv
-\log \cpt (\J(S))$. As $U^{\sigma_{\J(S)}}$ is harmonic on $V\setminus \{\infty\}$, the identity
principle for harmonic functions implies that $U^{\sigma_{\J(S)}}\equiv -\log \cpt (\J(S))$ on 
$V \setminus \{\infty\}$. This contradicts the fact that $U^{\sigma_{\J(S)}}(z)= -\log|z|+ o(1)$
as $z \to \infty$. Hence, $\supp(\equij) = \partial_e \J(S)$. Thus, by the continuity
of $\Q$, there is a $\J(S)$-open neighbourhood $\mathscr{N}$ of $z_1$ with $\equij(\mathscr{N}) > 0$
such that $\Q > 0$ on $\mathscr{N}$. This, together with the conclusions of the last two paragraphs,
gives
\[
\int_{\J(S)} \Q d\equij > 0.
\]
By \eqref{E:cbe}, we get the desired inequality.
\end{proof}

\begin{corollary}
Let $S$ be a finitely generated polynomial semigroup as in Theorem~\ref{T:bound}
and let $\gen=\{ g_1, g_2,\dots,g_N\}$ 
be a set of generators of $S$. 
If each element of $S$ is of degree at least 2 then
\[
{{\rm diam}}(\J(S)) > 2{|\lead{g_1}\lead{g_2}\dots\lead{g_N}|}^{\frac{1}{N-D}},
\]
where ${{\rm diam}}(\J(S))$ denotes the diameter of $\J(S)$ with respect to the Euclidean 
metric on $\C$.
\end{corollary}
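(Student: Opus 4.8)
The plan is to obtain the diameter estimate by feeding the capacity bound \eqref{E:bound_cap} of Theorem~\ref{T:bound} into the classical inequality ${\rm diam}(K)\ge 2\,\cpt(K)$, valid for every compact $K\subset\C$. First I would invoke \eqref{E:bound_cap}. For this one needs $S$ and the chosen generating set $\gen$ to satisfy the hypotheses under which that inequality was proved\,---\,in particular conditions $(i)$ and $(ii)$ of Theorem~\ref{T:bound} together with the standing assumption that every element of $S$ has degree at least $2$; these are what is meant by ``$S$ as in Theorem~\ref{T:bound}'' here. Since that theorem records the value $F_{\gen}=(D-N)^{-1}\log|\lead{g_1}\lead{g_2}\dots\lead{g_N}|$ with $D=\sum_{i=1}^{N}\deg(g_i)$, we get
\[
\cpt(\J(S)) \;>\; \exp(-F_{\gen}) \;=\; |\lead{g_1}\lead{g_2}\dots\lead{g_N}|^{\frac{1}{N-D}},
\]
recalling that $\J(S)$ is a compact subset of $\C$ by the structural results of Section~\ref{S:structural}.

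Next I would record the inequality $\cpt(K)\le\tfrac12\,{\rm diam}(K)$. The clean way to see this is to pass to the closed convex hull ${\rm conv}(K)$: by monotonicity of capacity $\cpt(K)\le\cpt({\rm conv}(K))$, while ${\rm conv}(K)$ is a compact continuum whose complement in $\sph$ is simply connected, so the exterior Riemann map of $\sph\setminus{\rm conv}(K)$ has the form $\psi(w)=cw+a_0+a_1w^{-1}+\cdots$ with $c=\cpt({\rm conv}(K))$. Expanding $\psi(re^{i\theta})-\psi(-re^{i\theta})$ in a Fourier series and applying Parseval gives $\frac{1}{2\pi}\int_{0}^{2\pi}|\psi(re^{i\theta})-\psi(-re^{i\theta})|^{2}\,d\theta\ge 4c^{2}r^{2}$ for $r>1$, so for a suitable $\theta$ the two points $\psi(re^{i\theta})$ and $\psi(-re^{i\theta})$ of $\sph\setminus{\rm conv}(K)$ lie at distance at least $2cr$; letting $r\downarrow 1$ and using the continuous boundary extension of $\psi$ yields ${\rm diam}({\rm conv}(K))\ge 2c$. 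Since ${\rm conv}(K)$ has the same diameter as $K$, this gives $\cpt(K)\le\tfrac12\,{\rm diam}(K)$. (This is classical and may instead simply be cited; see, e.g., \cite{tsuji:ptmft75}.)

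Combining the two displays with $K=\J(S)$ gives
\[
{\rm diam}(\J(S)) \;\ge\; 2\,\cpt(\J(S)) \;>\; 2\,|\lead{g_1}\lead{g_2}\dots\lead{g_N}|^{\frac{1}{N-D}},
\]
which is exactly the asserted bound. There is no genuine obstacle here: the only ingredient beyond a direct substitution is the capacity--diameter inequality with the sharp constant $2$, which is standard; the strictness in the conclusion is inherited from the strict inequality in \eqref{E:bound_cap}.
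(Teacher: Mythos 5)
Your proposal is correct and follows essentially the same route as the paper: substitute the strict capacity lower bound from Theorem~\ref{T:bound} (with the explicit value of $F_{\gen}$) into the classical inequality $\cpt(K)\le\tfrac12\,{\rm diam}(K)$, which the paper simply cites from Ransford. Your sketch of the capacity--diameter inequality via the exterior Riemann map and Parseval is a fine, standard argument, though as you note it can just be cited.
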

\begin{proof}
The estimate is a consequence of the following relation between
logarithmic capacity and diameter: if $K$ is a compact subset of $\C$ then
\[
\cpt(K) \leq \frac{{{\rm diam}}(K)}{2}
\]
---\,see, for instance, \cite[Theorem~5.3.4]{ransford:ptCp95}. The result now follows from 
\eqref{E:bound_cap}.
\end{proof}

\section*{Acknowledgments}
\noindent{I would like to thank my thesis adviser, Prof. Gautam Bharali, for several fruitful discussions
and  for his help with the writing of this paper. I also thank the referee for his/her helpful suggestions.
This work is supported by a scholarship from the National Board for Higher Mathematics
(Ref. No. 2/39(2)/2016/NBHM/R\&D-II/11411) and by a UGC CAS-II grant (Grant No. F.510/25/CAS-II/2018(SAP-I)).}
\medskip

\end{document}